\theoremstyle{plain}
\newtheorem{theorem}{Theorem}[section]
\newtheorem{conjecture}[theorem]{Conjecture}
\newtheorem{corollary}[theorem]{Corollary}
\newtheorem{lemma}[theorem]{Lemma}
\newtheorem{proposition}[theorem]{Proposition}
\theoremstyle{definition}
\newtheorem{remark}[theorem]{Remark}
\newtheorem{definition}[theorem]{Definition}
\begin{document}
\title[Strongly productive ultrafilters on semigroups]{Strongly productive
ultrafilters on semigroups}
\author[David Fern\'andez]{David J. Fern\'andez Bret\'on}
\address{Department of Mathematics and Statistics, York University\\
N520 Ross, 4700 Keele Street, M3J 1P3\\
Toronto (Ontario), Canada}
\curraddr{Mathematics Department, University of Michigan \\
2074 East Hall, 530 Church Street \\
Ann Arbor, MI 48109-1043, U.S.A.}
\email{djfernan@umich.edu}
\urladdr{http://www-personal.umich.edu/\textasciitilde djfernan/}
\thanks{}
\author{Martino Lupini}
\address{Department of Mathematics and Statistics, York University\\
N520 Ross, 4700 Keele Street, M3J 1P3\\
Toronto (Ontario), Canada}
\curraddr{Fakultät für Mathematik, Universität Wien, Oskar-Morgenstern-Platz
1, Room 02.126, 1090 Wien, Austria.}
\email{martino.lupini@univie.ac.at}
\urladdr{www.lupini.org}
\thanks{David Fern\'andez was supported by scholarship number 213921/309058
of the Consejo Nacional de Ciencia y Tecnología (CONACyT), Mexico. Martino
Lupini was supported by the York University Elia Scholars Program.}
\subjclass[2010]{Primary 54D80; Secondary 54D35, 20M14, 20M18, 20M05.}
\keywords{Ultrafilters, strongly productive ultrafilters, commutative
semigroups, solvable groups, solvable inverse semigroups, \v Cech-Stone
compactification, finite products.}
\date{}
\dedicatory{}

\begin{abstract}
We prove that if $S$ is a commutative semigroup with well-founded universal
semilattice or a solvable inverse semigroup with well-founded semilattice of
idempotents, then every strongly productive ultrafilter on $S$ is
idempotent. Moreover we show that any very strongly productive ultrafilter
on the free semigroup with countably many generators is sparse, answering a
question of Hindman and Legette Jones.%
\end{abstract}

\maketitle











\section{Introduction\label{Section: introduction}}

Let $S$ be a multiplicatively denoted semigroup. If $\vec{x}=\left(
x_{n}\right) _{n\in \omega }$ is a sequence of elements of $S$, then the 
\emph{finite products set} associated with $\vec{x}$, denoted by $%
\mathop{\mathrm{FP}}\nolimits(\vec{x})$, is the set of products (taken in
increasing order of indices)%
\begin{equation*}
\prod_{i\in a}x_{i}\in S
\end{equation*}%
where $a$ ranges among the finite subsets of $\omega $. If $k\in \omega $
then $\mathop{\mathrm{FP}}\nolimits_{k}(\vec{x})$ stands for the \textup{FP}%
-set associated with the sequence $\left( x_{n+k}\right) _{n\in \omega }$. A
subset of $S$ is called an \textup{FP}-set if it is of the form $%
\mathop{\mathrm{FP}}\nolimits(\vec{x})$ for some sequence $\vec{x}$ in $S$,
and an \textup{IP}-set if it contains an \textup{FP}-set (see \cite[%
Definition 16.3]{Hindman-Strauss}). An ultrafilter $p$ on $S$ (a gentle
introduction to ultrafilters can be found in \cite[Appendix B]%
{Capraro-Lupini}) is \emph{strongly productive} as in \cite[Section 1]%
{Hindman-Jones}) if it has a basis of \textup{FP}-sets. This means that for
every $A\in p$ there is an \textup{FP}-set contained in $A$ that is a member
of $p$. When $S$ is an additively denoted commutative semigroup, then the
finite products sets are called \emph{finite sums sets }or \textup{FS}-sets
and denoted by $\mathop{\mathrm{FS}}\nolimits(\vec{x})$. Moreover strongly
productive ultrafilters are called in this context \emph{strongly summable}
(see \cite[Definition 1.1]{Hin-Prot-Strauss}).

The concept of strongly summable ultrafilter was first considered in the
case of the semigroup of positive integers in \cite{Hindman-summable} by
Hindman upon suggestion of van Douwen (see also the notes at the end of \cite%
[Chapter 12]{Hindman-Strauss}). 
Later Hindman, Protasov, and Strauss studied in \cite{Hin-Prot-Strauss}
strongly summable ultrafilters on arbitrary abelian groups. Theorem 2.3 in 
\cite{Hin-Prot-Strauss} asserts that any strongly summable ultrafilter on an
abelian group $G$ is idempotent, i.e.\ an idempotent element of the
semigroup compactification $\beta G$ of $G$, which can be seen as the
collection of all ultrafilters on $G$. Similarly, strongly productive
ultrafilters on a free semigroup are also idempotent by \cite[Lemma 2.3]%
{Hindman-Jones}.

In this paper we provide a common generalization of these results to a class
of semigroups containing in particular all commutative semigroups with well-founded universal semilattice, and solvable inverse semigroups with well-founded semilattice of idempotents. (The notion of universal semilattice of
a semigroup is presented in \cite[Section III.2]{Grillet}. Inverse
semigroups are introduced in \cite[Section II.2]{Grillet}, and within these
the class of solvable semigroups is defined in \cite[Definition 3.2]{Piochi1}%
. Solvable groups are the solvable inverse semigroups with exactly one
idempotent element by \cite[Theorem 3.4]{Piochi1}.)

\begin{theorem}
\label{Theorem: idempotent}If $S$ is either a commutative semigroup with
well-founded universal semilattice, or a solvable inverse semigroup with
well-founded semilattice of idempotents, then every strongly productive
ultrafilter on $S$ is idempotent.
\end{theorem}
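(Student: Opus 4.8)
The plan is to reduce idempotency to a combinatorial statement about tails of FP-sets, following the pattern of \cite{Hin-Prot-Strauss}, and then to verify that statement by pushing $p$ forward to the well-founded semilattice attached to $S$ and invoking the known group case. Throughout I use that $p\in\beta S$ is idempotent if and only if $\{s\in S:s^{-1}A\in p\}\in p$ for every $A\in p$, where $s^{-1}A=\{t\in S:st\in A\}$; I also note that a strongly productive ultrafilter is either non-principal or principal at an idempotent (if $\{s\}\in p$ and $\fp(\vec x)=\{s\}\in p$ then $x_{0}=x_{1}=x_{0}x_{1}=s$, so $s^{2}=s$), the latter being trivially idempotent, so I may assume $p$ non-principal. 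Given $A\in p$, strong productivity yields $\vec x$ with $\fp(\vec x)\subseteq A$ and $\fp(\vec x)\in p$. Since $(\prod_{i\in a}x_{i})\cdot\fp_{n}(\vec x)\subseteq\fp(\vec x)\subseteq A$ whenever $\emptyset\neq a\subseteq n$, we get that $\fp_{n}(\vec x)\in p$ implies $\prod_{i\in a}x_{i}\in\{s:s^{-1}A\in p\}$ for all such $a$. Thus it suffices to prove the \emph{tail property}: for every $\vec x$ with $\fp(\vec x)\in p$ there is $n\geq 1$ with $\fp_{n}(\vec x)\in p$. Indeed, applying this repeatedly inside $\fp_{n}(\vec x)$, $\fp_{n+n'}(\vec x)$, \dots\ (the shift amounts being positive) shows $\{n:\fp_{n}(\vec x)\in p\}$ is infinite, and since the sets $\{\prod_{i\in a}x_{i}:\emptyset\neq a\subseteq n\}$ increase to $\fp(\vec x)$, we obtain $\{s:s^{-1}A\in p\}\supseteq\fp(\vec x)\in p$.

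Next I would bring in the semilattice. Let $\mathcal L$ be the universal semilattice of $S$ in the commutative case, or the semilattice $E(S)$ of idempotents in the inverse case, and let $\pi\colon S\to\mathcal L$ be the canonical map (a homomorphism in the commutative case; in the inverse case one works with the subsemilattice $E(S)\subseteq S$ and with the two idempotents $s^{-1}s$, $ss^{-1}$ attached to each $s$). A homomorphic image of an FP-set is an FP-set, and in a semilattice the FP-set $\fp(\vec u)=\{\bigwedge_{i\in a}u_{i}:\emptyset\neq a\in[\omega]^{<\omega}\}$ is closed under the operation, hence is a subsemigroup; consequently every strongly productive ultrafilter on a semilattice has a basis of subsemigroups and is automatically idempotent. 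In particular the pushforward $\pi_{*}p$ is strongly productive and therefore idempotent on $\mathcal L$. The decisive consequence of well-foundedness is that every descending sequence of finite meets of the $\pi(x_{i})$, and more generally every descending sequence in $\mathcal L$, must stabilize; this is exactly what keeps the following argument from running forever.

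Finally I would prove the tail property, and I expect this to be the main obstacle. Assume toward a contradiction that $\fp_{n}(\vec x)\notin p$ for all $n\geq 1$. Writing $\fp(\vec x)=\{x_{0}\}\cup\fp_{1}(\vec x)\cup x_{0}\fp_{1}(\vec x)$ and using non-principality gives $x_{0}\fp_{1}(\vec x)\in p$; iterating produces finite sets $a_{1}\subseteq a_{2}\subseteq\cdots$ with $0\in a_{1}$, $a_{n}\subseteq n$, and $w_{n}\fp_{n}(\vec x)\in p$ for $w_{n}=\prod_{i\in a_{n}}x_{i}$. A priori this ``head-swallowing'' is not absurd---$p$ could genuinely concentrate on products divisible by a fixed nontrivial head. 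The cure is that $\pi(w_{n})$ descends in $\mathcal L$, hence stabilizes, and this together with the structure theory (in the commutative case $S$ is a semilattice of archimedean semigroups \cite[Section III.2]{Grillet}, and each archimedean piece is, via its minimal ideal when an idempotent is present or via a group of fractions otherwise, governed by an abelian group, where \cite[Theorem 2.3]{Hin-Prot-Strauss} applies; in the inverse case one uses solvability, via \cite[Definition 3.2]{Piochi1} and \cite[Theorem 3.4]{Piochi1}, to descend through the derived series down to solvable and ultimately abelian groups) pins the analysis to a cancellative/group situation. There, after refining $\vec x$ within $p$ to a system with uniquely represented FP-products---available by strong productivity and the standard refinement lemmas in these settings---the inclusion $\fp(\vec y)\subseteq w\,\fp_{n}(\vec x)$ supplied by strong productivity of $p$ forces, upon cancelling $w$ from an identity of the form $y_{0}y_{1}=w\cdot u$ with $u\in\fp_{n}(\vec x)$, a member of $\fp_{n}(\vec x)$ that visibly involves a generator $x_{i}$ with $i<n$, contradicting unique representability. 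Hence some tail $\fp_{n}(\vec x)$ lies in $p$, and by the first paragraph $p$ is idempotent. The hardest point to get right will be this last reduction: certifying that the stabilization of $\pi(w_{n})$ genuinely confines $p$ to a single cancellative/group ``layer,'' and that enough combinatorial independence survives the refinement to close the argument uniformly across both the archimedean and the solvable-inverse cases.
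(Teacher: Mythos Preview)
Your outline has the right shape, but the two places you yourself flag as unfinished are genuine gaps. First, in the solvable inverse semigroup case there is no homomorphism $S\to E(S)$: neither $s\mapsto ss^{-1}$ nor $s\mapsto s^{-1}s$ is multiplicative, so you cannot push $p$ forward to the idempotent semilattice in the way you suggest. The paper instead inducts on the solvability class: by definition a solvable inverse semigroup of class $n+1$ carries a \emph{commutative} congruence $\gamma_S$ such that $f^{-1}[E(S/\gamma_S)]$ is solvable of class $n$; one checks (Lemma~\ref{Lemma: inverse well founded}) that $S/\gamma_S$ still has well-founded idempotent semilattice, and then regularity of the pushforward of $p$ to the commutative inverse semigroup $S/\gamma_S$ pulls back to $p$ via Remark~\ref{Remark: inverse image}.

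Second, in the commutative case the stabilization of $\pi(w_n)$ does not confine $p$ to a single Archimedean component: even once $\pi(w_n)=e$ is constant, the sets $w_n\fp_n(\vec x)$ may meet every component below $e$, so you are not yet in a cancellative or group ``layer''. The paper resolves this by a clean dichotomy. If $p$ contains some Archimedean component $A$, one proves directly that Archimedean commutative semigroups are \textup{IP}-regular (Propositions~\ref{Proposition: abelian} and~\ref{Proposition: archimedean}); this is the real work, carried out via rank functions on $(\mathbb{R}/\mathbb{Z})^{\oplus\kappa}$, and it is precisely where the ``unique representation of finite products'' you invoke is \emph{manufactured} rather than available as a ``standard refinement lemma''. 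If $p$ contains no Archimedean component, then $\pi_*p$ is nonprincipal on the well-founded semilattice and a short well-founded induction (Lemma~\ref{Lemma: semilattice}) shows it is regular, which again pulls back to $p$. Your third paragraph conflates these two mechanisms; separating them and actually supplying the \textup{IP}-regularity of the cancellative/Archimedean pieces is what remains to be done.
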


In order to prove Theorem \ref{Theorem: idempotent} we find it convenient to
consider the following strengthening of the notion of strongly productive 
ultrafilter:

\begin{definition}
\label{Definition: superstrongly}A nonprincipal strongly productive
ultrafilter $p$ on a semigroup $S$ is \emph{regular }if it contains an
element $B$ with the following property: Whenever $\vec{x}$ is a sequence in 
$S$ such that $\mathop{\mathrm{FP}}\nolimits(\vec{x})\subset B$, the set $%
x_{0}\mathop{\mathrm{FP}}\nolimits_{1}(\vec{x})$ does not belong to $p$.
\end{definition}

\begin{remark}
\label{Remark: inverse image}Suppose that $S,T$ are semigroups, $%
f:S\rightarrow T$ is a semigroup homomorphism, and $p$ is a strongly
productive ultrafilter on $S$. Denote by $q$ the ultrafilter on $T$ defined
by $B\in q$ if and only if $f^{-1}\left[ B\right] \in p$ (note that $q$ is
the image of $p$ under the unique extension of $f$ to a continuous function
from the \v Cech-Stone compactification of $S$ to the \v Cech-Stone
compactification of $T$). It is easy to see that $q$ is a strongly productive 
ultrafilter on $T$. Moreover if $q$ is regular then $p$ is regular.
\end{remark}

We will show that the notions of strongly productive and regular strongly
productive ultrafilter coincide for the classes of semigroups considered in
Theorem \ref{Theorem: idempotent}.

\begin{theorem}
\label{Theorem: superstrongly}If $S$ is either a commutative semigroup with
well-founded universal semilattice, or a solvable inverse semigroup with
well-founded semilattice of idempotents, then every nonprincipal strongly
productive ultrafilter on $S$ is regular.
\end{theorem}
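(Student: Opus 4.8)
The plan is to isolate a combinatorial lemma about semilattices, then bootstrap it to the commutative case through the universal semilattice and to the solvable inverse semigroup case by induction on the solvability length through the semilattice of idempotents; Remark \ref{Remark: inverse image} will propagate regularity along the homomorphisms used in these reductions.

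The lemma I would prove first is: \emph{if $L$ is a well-founded semilattice and $p$ is a nonprincipal strongly productive ultrafilter on $L$, then $B:=\{c\in L:{\downarrow}c\notin p\}$ belongs to $p$ and witnesses that $p$ is regular}, where ${\downarrow}c$ and ${\uparrow}c$ denote the principal down-set and up-set of $c$. For the membership, if instead $V:=L\setminus B=\{c:{\downarrow}c\in p\}$ were in $p$, strong productivity would give a sequence $\vec x$ with $\fp(\vec x)\subseteq V$ and $\fp(\vec x)\in p$; well-foundedness forces the decreasing sequence of partial products of $\vec x$ to stabilize, so its eventual value $m$ is the least element of $\fp(\vec x)$, whence ${\uparrow}m\in p$ (it contains $\fp(\vec x)$), while $m\in\fp(\vec x)\subseteq V$ gives ${\downarrow}m\in p$, and then $\{m\}={\uparrow}m\cap{\downarrow}m\in p$ contradicts nonprincipality. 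That $B$ works is then immediate: if $\fp(\vec x)\subseteq B$ then $x_0\in B$, so ${\downarrow}x_0\notin p$, while $x_0\fp_1(\vec x)\subseteq{\downarrow}x_0$ since $x_0z\le x_0$ in a semilattice.

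For the commutative case, let $\pi\colon S\to Y$ be the canonical map onto the well-founded universal semilattice, so that $q:=\pi(p)$ is a strongly productive ultrafilter on $Y$ by Remark \ref{Remark: inverse image}. If $q$ is nonprincipal, the very same computation shows that $B:=\{s\in S:\pi^{-1}[{\downarrow}\pi(s)]\notin p\}$ belongs to $p$ and witnesses regularity, using $x_0\fp_1(\vec x)\subseteq\pi^{-1}[{\downarrow}\pi(x_0)]$. If $q$ is principal, say at $m\in Y$, then $T:=\pi^{-1}(\{m\})\in p$ is an archimedean subsemigroup of $S$, $p\restriction T$ is a nonprincipal strongly productive ultrafilter on $T$, and $p$ is regular on $S$ iff $p\restriction T$ is regular on $T$ (since any sequence whose $\fp$-set lies in $T$ is itself a sequence in $T$); thus the problem reduces to commutative archimedean semigroups. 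There one must argue directly, in the style of Hindman--Protasov--Strauss: pass to a basis of $\fp$-sets enjoying uniqueness of finite products, note that $\{x_0\}$, $\fp_1(\vec x)$ and $x_0\fp_1(\vec x)$ then partition $\fp(\vec x)$, and rule out $x_0\fp_1(\vec x)\in p$; the structure theory of archimedean semigroups (the kernel being a group when an idempotent is present) confines what remains to the group case, which subsumes abelian groups and $(\mathbb{N},+)$.

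Finally, for a solvable inverse semigroup $S$ with well-founded semilattice of idempotents, I would induct on the solvability length, the base cases being semilattices (the lemma above) and groups (handled via the commutative/archimedean analysis of the abelianization together with Remark \ref{Remark: inverse image}, the derived series reducing the solvable case to the abelian one). For the inductive step, solvability supplies a congruence whose quotient is a solvable inverse semigroup with well-founded semilattice of idempotents of strictly smaller length; applying the inductive hypothesis to the quotient, together with Remark \ref{Remark: inverse image}, handles the case of a nonprincipal image, while a principal image localizes $p$ to a piece of strictly smaller length to which the inductive hypothesis again applies. I expect the hard part to be precisely the two places where the semilattice trick gives out: the archimedean/group core, where one needs a uniqueness-of-products sharpening of the Hindman--Protasov--Strauss method, and the bookkeeping in the solvability induction, where one must verify that every piece passed to is genuinely an inverse semigroup with well-founded semilattice of idempotents and strictly smaller solvability length.
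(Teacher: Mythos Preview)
Your high-level architecture matches the paper's: a lemma on well-founded semilattices, pulled back along the quotient to the universal semilattice via Remark~\ref{Remark: inverse image}, with the principal-image case landing in an Archimedean component; and for solvable inverse semigroups, induction on the solvability class using the commutative congruence $\gamma_S$, together with preservation of well-foundedness of the idempotent semilattice under quotients. Your semilattice argument is pleasantly different in execution: you name the witness $B=\{c:{\downarrow}c\notin p\}$ outright and derive a contradiction from stabilization of the decreasing chain of partial products, whereas the paper argues by well-founded induction on the predicate ``$\pred(x)\in q$ implies $q$ is regular''. Both are correct; yours has the advantage of making the witnessing set explicit.

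The Archimedean core, however, is where the paper's route genuinely diverges from your sketch, and where your proposed mechanism is not quite the right tool. The paper does \emph{not} pass through uniqueness of finite products. Instead it introduces \textup{IP}-regularity (the non-idempotent part is a finite union of pieces on each of which $x_0\fp_1(\vec x)$ is \emph{never} an \textup{IP}-set), observes that any \textup{IP}-regular partial semigroup with finitely many idempotents automatically has all nonprincipal strongly productive ultrafilters regular, and then shows Archimedean semigroups are \textup{IP}-regular by reducing to cancellative commutative semigroups, embedding those into $(\mathbb{R}/\mathbb{Z})^{\oplus\kappa}$, and building explicit rank functions on a finite partition. Your partition-by-uniqueness idea does not by itself produce a single witnessing set $B$ valid for \emph{all} sequences, and disjointness of $\{x_0\}\cup\fp_1(\vec x)\cup x_0\fp_1(\vec x)$ does not decide which piece lies in $p$; so ``rule out $x_0\fp_1(\vec x)\in p$'' is exactly the step that still needs an argument, and supplying one amounts to reproving something equivalent to the paper's \textup{IP}-regularity analysis. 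You were right to flag this as the hard part, but the method you gesture at would need to be replaced rather than merely filled in.
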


The \emph{universal semilattice} of a semigroup $S$ is the quotient of $S$
by the smallest semilattice congruence $\mathcal{N}$ on $S$, see \cite[%
Section III.2]{Grillet}. When $S$ is a commutative inverse semigroup the set 
$E(S)$ of idempotent elements of $S$ is a semilattice, and the restriction
to $E(S)$ of the quotient map from $S$ to $S\left/ \mathcal{N}\right. $ is
an isomorphism from $E(S)$ onto $S\left/ \mathcal{N}\right. $. Recall also 
that an ordered set is said to be \emph{well-founded} if
every nonempty subset has a least element.

Although not using this terminology, there is a well-known argument showing
that any regular strongly productive ultrafilter is idempotent, see for
example \cite[Theorem 12.19]{Hindman-Strauss}, or \cite[Lemma 2.3]%
{Hindman-Jones}. We reproduce the argument in Lemma \ref{Lemma:
superstrongly} below for convenience of the reader. Using this fact, Theorem %
\ref{Theorem: idempotent} will be a direct consequence of Theorem \ref%
{Theorem: superstrongly}.

\begin{lemma}
\label{Lemma: superstrongly}Suppose that $S$ is a semigroup, and $p$ is an
ultrafilter on $S$. If $p$ is regular strongly productive, then $p$ is
idempotent.
\end{lemma}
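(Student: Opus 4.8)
The plan is to follow the classical route (compare \cite[Theorem 12.19]{Hindman-Strauss} and \cite[Lemma 2.3]{Hindman-Jones}) that extracts idempotency from the combinatorics of finite products. Writing $s^{-1}A = \{t \in S : st \in A\}$ for $s \in S$ and $A \subseteq S$, recall that $p$ is idempotent precisely when $\{s \in S : s^{-1}A \in p\} \in p$ for every $A \in p$; since $p$ is an ultrafilter, it suffices to establish this membership. Fix an element $B \in p$ witnessing that $p$ is regular, in the sense of Definition \ref{Definition: superstrongly}.

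The heart of the argument --- and the step I expect to be the main obstacle --- is the auxiliary claim that, whenever $\vec{y}$ is a sequence in $S$ with $\fp(\vec{y}) \subseteq B$ and $\fp(\vec{y}) \in p$, one has $\fp_{1}(\vec{y}) \in p$. To prove it I would split $\fp(\vec{y})$ according to whether the index set of a product is $\{0\}$, avoids $0$, or properly contains $0$, giving $\fp(\vec{y}) = \{y_{0}\} \cup \fp_{1}(\vec{y}) \cup y_{0}\fp_{1}(\vec{y})$. The singleton $\{y_{0}\}$ is not in $p$ since $p$ is nonprincipal (every regular ultrafilter is nonprincipal by Definition \ref{Definition: superstrongly}), and $y_{0}\fp_{1}(\vec{y}) \notin p$ by the defining property of $B$ applied to $\vec{y}$, which is legitimate because $\fp(\vec{y}) \subseteq B$. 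As $p$ is an ultrafilter, one of the three sets must belong to it, and it can only be $\fp_{1}(\vec{y})$. Iterating --- formally by induction on $k$, using $\fp_{k+1}(\vec{x}) = \fp_{1}\bigl((x_{n+k})_{n \in \omega}\bigr)$ together with the trivial inclusion $\fp_{k}(\vec{x}) \subseteq \fp(\vec{x}) \subseteq B$ --- I obtain that if $\fp(\vec{x}) \subseteq B$ and $\fp(\vec{x}) \in p$ then $\fp_{k}(\vec{x}) \in p$ for every $k \in \omega$.

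The remainder is routine. Given $A \in p$, I would first replace $A$ by $A \cap B$ so that $A \subseteq B$, and then invoke strong productivity to choose $\vec{x}$ with $\fp(\vec{x}) \subseteq A \subseteq B$ and $\fp(\vec{x}) \in p$. It then suffices to check $\fp(\vec{x}) \subseteq \{s \in S : s^{-1}A \in p\}$: for $s = \prod_{i \in a} x_{i}$ with $a$ finite, set $k = \max(a) + 1$; since the index sets concatenate in increasing order, left multiplication by $s$ carries $\fp_{k}(\vec{x})$ into $\fp(\vec{x}) \subseteq A$, so $\fp_{k}(\vec{x}) \subseteq s^{-1}A$, and $\fp_{k}(\vec{x}) \in p$ by the claim, whence $s^{-1}A \in p$. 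Therefore $\{s \in S : s^{-1}A \in p\} \supseteq \fp(\vec{x})$ lies in $p$, and since $A \in p$ was arbitrary, $p$ is idempotent. In summary, the only place where the hypotheses on $p$ are genuinely used is the auxiliary claim, where regularity disposes of $y_{0}\fp_{1}(\vec{y})$ and nonprincipality disposes of $\{y_{0}\}$; everything else is the standard shifting of finite-product sets.
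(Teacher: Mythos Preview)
Your proof is correct and follows essentially the same approach as the paper's: both decompose $\fp(\vec{x}) = \{x_0\} \cup x_0\fp_1(\vec{x}) \cup \fp_1(\vec{x})$, use nonprincipality and the regularity witness $B$ to force $\fp_1(\vec{x}) \in p$, iterate to get $\fp_k(\vec{x}) \in p$ for all $k$, and then use the shift $s\fp_k(\vec{x}) \subseteq \fp(\vec{x}) \subseteq A$ to conclude. Your write-up is in fact slightly more explicit than the paper's in making the hypothesis $\fp(\vec{x}) \in p$ visible, which the paper leaves implicit in its choice of $\vec{x}$.
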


\begin{proof}
Fix an element $B$ of $p$ witnessing the fact that $p$ is regular. Suppose
that $A$ is an element of $p$ and $\vec{x}$ is a sequence in $S$ such that $%
\mathop{\mathrm{FP}}\nolimits(\vec{x})\subset A\cap B$. Since 
\begin{equation*}
\mathop{\mathrm{FP}}\nolimits(\vec{x})=\{x_{0}\}\cup x_{0}%
\mathop{\mathrm{FP}}\nolimits_{1}(\vec{x})\cup \mathop{\mathrm{FP}}%
\nolimits_{1}(\vec{x}),
\end{equation*}%
and $p$ is nonprincipal and regular, it follows that $\mathop{\mathrm{FP}}%
\nolimits_{1}(\vec{x})\in p$. Using this argument, one can show by induction
that $\mathop{\mathrm{FP}}\nolimits_{n}(\vec{x})\in p$ for every $n\in
\omega $. Now notice that, if $x=\prod_{i\in a}x_{i}\in \mathop{\mathrm{FP}}%
\nolimits(\vec{x})$ and $n=\max (a)+1$, then $x\mathop{\mathrm{FP}}%
\nolimits_{n}(\vec{x})\subseteq \mathop{\mathrm{FP}}\nolimits(\vec{x}%
)\subseteq A$. Therefore $\mathop{\mathrm{FP}}\nolimits_{n}(\vec{x}%
)\subseteq x^{-1}A$ and since the former set is an element of $p$, so is the
latter. Hence 
\begin{equation*}
\mathop{\mathrm{FP}}\nolimits(\vec{x})\subset \left\{ x\in S:x^{-1}A\in
p\right\}
\end{equation*}%
and so the latter set belongs to $p$. This shows that $p$ is idempotent$.$
\end{proof}


To our knowledge it is currently not known if the existence of a semigroup $%
S $ and a nonprincipal strongly productive ultrafilter on $S$ that is not
idempotent is consistent with the usual axioms of set theory. We think that
Theorem \ref{Theorem: idempotent} as well as \cite[Theorem 2.3]%
{Hin-Prot-Strauss} provide evidence that for all semigroups $S$, every
strongly productive ultrafilter on $S$ should be idempotent.

\begin{conjecture}
\label{Conjecture: all regular}Every strongly productive ultrafilter on an
arbitrary semigroup is idempotent.
\end{conjecture}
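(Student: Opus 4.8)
The plan is to reduce Conjecture~\ref{Conjecture: all regular} to a uniform version of Theorem~\ref{Theorem: superstrongly} and then to try to push the latter past the two structured classes of semigroups. First, the principal case is automatic: if $p$ is the principal ultrafilter at $x\in S$ and is strongly productive, then applying the defining property to $A=\{x\}\in p$ yields a sequence $\vec{x}$ with $\emptyset\neq\mathop{\mathrm{FP}}\nolimits(\vec{x})\subseteq\{x\}$, hence $\mathop{\mathrm{FP}}\nolimits(\vec{x})=\{x\}$, which forces every $x_n=x$ and $x\cdot x=x$; thus $x$ is idempotent and so is $p$. One may therefore assume $p$ nonprincipal, and by Lemma~\ref{Lemma: superstrongly} it is enough to prove that $p$ is regular. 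Hence the conjecture is equivalent to the assertion that \emph{every nonprincipal strongly productive ultrafilter on an arbitrary semigroup is regular}, i.e.\ Theorem~\ref{Theorem: superstrongly} with all hypotheses on $S$ removed.

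To attack this, I would lean on Remark~\ref{Remark: inverse image}: since regularity is inherited from any homomorphic image, it suffices, for each $S$, to exhibit one semigroup homomorphism $f\colon S\to T$ for which one can verify that the pushforward $q=f(p)$ is regular on $T$. In the cases already handled, $T$ is (governed by) the universal semilattice, and the well-foundedness hypothesis is what lets one isolate a minimal $\mathcal{N}$-class --- equivalently an idempotent --- on which $q$ concentrates; on that class the generators $x_n$ all become equivalent, and then a counting argument (on the number of ways an element of $\mathop{\mathrm{FP}}\nolimits(\vec{x})$ factors as an ordered product, or on lengths of such factorizations) shows $x_0$ cannot be absorbed, so $x_0\mathop{\mathrm{FP}}\nolimits_1(\vec{x})\notin q$. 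For the general conjecture one would want a canonical homomorphic image of an arbitrary $S$ carrying enough order-theoretic structure to run such a minimality-plus-counting scheme. The natural first targets are arbitrary groups (where the universal semilattice collapses, so one must work directly with a well-behaved subnormal chain of subgroups onto whose successive quotients $p$ projects nontrivially) and then general inverse semigroups, before the fully general case. An alternative, non-semilattice route worth trying is to show that $p$ must coincide with one of the idempotents of $\beta S$ produced by the Galvin--Glazer argument inside $\overline{\mathop{\mathrm{FP}}\nolimits(\vec{x})}$ for each $\mathop{\mathrm{FP}}\nolimits(\vec{x})\in p$.

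The step I expect to be the genuine obstacle --- and the reason the statement is only a conjecture --- is the possible total absence of well-founded structure: in self-similar situations (for instance when $S$ embeds into $x_0 S x_0$ for some generator $x_0$, or in sufficiently non-solvable groups) the minimality argument has nothing to latch onto, and it is not even clear that a witnessing set $B$ for regularity must exist. As the text notes, the consistency of a nonprincipal non-idempotent strongly productive ultrafilter is itself open, so I would treat that as a live alternative: in parallel with the attempted proof I would try to construct such an ultrafilter by forcing or by a diagonalization under a suitable cardinal hypothesis on a deliberately self-similar semigroup, since a consistent counterexample would refute the conjecture, whereas a proof that none exists would sharply constrain the shape of any eventual positive argument.
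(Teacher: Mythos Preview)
The statement you are addressing is labeled \emph{Conjecture} in the paper, and the paper does not prove it; it only offers Theorem~\ref{Theorem: idempotent} as supporting evidence. So there is no ``paper's own proof'' to compare against, and what you have written is, as you yourself make clear in the last paragraph, a research plan rather than a proof. That is the genuine gap: none of the steps after the reduction to regularity is actually carried out, and you correctly identify the lack of any well-founded invariant on a general semigroup as the obstruction.

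Two smaller points on the plan itself. First, your claim that the conjecture is \emph{equivalent} to the unrestricted version of Theorem~\ref{Theorem: superstrongly} overstates what Lemma~\ref{Lemma: superstrongly} gives: regularity implies idempotence, but the paper provides no converse, so universal regularity is a priori a strictly stronger statement than Conjecture~\ref{Conjecture: all regular}. It is a sufficient target, not an equivalent one. Second, your summary of how the paper handles the structured cases is off. The paper does not ``isolate a minimal $\mathcal{N}$-class on which $q$ concentrates'' and then run a counting argument. Rather, it splits into two cases: if $p$ already lives in a single Archimedean component, one uses that Archimedean commutative semigroups are \textup{IP}-regular (Proposition~\ref{Proposition: archimedean}, ultimately via rank functions on $(\mathbb{R}/\mathbb{Z})^{\oplus\kappa}$); if not, one pushes $p$ forward to the universal semilattice and runs a well-founded induction on predecessors (Lemma~\ref{Lemma: semilattice}). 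There is no factor-counting in either branch. Your handling of the principal case, on the other hand, is correct and is a detail the paper leaves implicit.
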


This paper is organized as follows: In Section \ref{Section: regular} we
introduce the notion of \textup{IP}-regular (partial) semigroup and observe
that \textup{IP}-regular semigroups satisfy the conclusion of Theorem \ref%
{Theorem: superstrongly}. In Section \ref{Section: abelian} we show that
commutative cancellative semigroups are \textup{IP}-regular. In Section \ref%
{Section: IP-regular} we record some closure properties of the class of 
\textup{IP}-regular semigroups, implying in particular that all (virtually)
solvable groups are \textup{IP}-regular. In Section \ref{Section: main
theorem} we present the proof of Theorem \ref{Theorem: superstrongly}.
Finally in Section \ref{Section: sparseness} we discuss sparseness of
strongly productive ultrafilters, and show that every very strongly productive
ultrafilter on the free semigroup with countably many generators is sparse,
answering Question 3.8 from \cite{Hindman-Jones}.%
\subsection*{Acknowledgments} We would like to thanks the anonymous referee and Jimmie Lawson for their comments and remarks.

\section{\textup{IP}-regularity\label{Section: regular}}

A \emph{partial semigroup }as defined in \cite[Section I.3]{Grillet-abelian}
is a set $P$ endowed with a partially defined (multiplicatively denoted)
operation such that for every $a,b,c\in P$ 
\begin{equation*}
(ab)c=a(bc)
\end{equation*}%
whenever both $(ab)c$ and $a(bc)$ are defined. Observe that in particular
every semigroup is a partial semigroup. Moreover any subset of a partial
semigroup is naturally endowed with a partial semigroup structure when one
considers the restriction of the operation. An element $a$ of a partial
semigroup $P$ is idempotent if $a\cdot a$ is defined and equal to $a$. The
set of idempotent elements of $P$ is denoted by $E(P)$. The notion of 
\textup{FP}-set and \textup{IP}-set admit straightforward generalizations to
the framework of partial semigroups. If $\vec{x}$ is a sequence of elements
of a partial semigroup $P$ such that all the products (taken in increasing
order of indices)%
\begin{equation*}
\prod_{i\in a}x_{i}
\end{equation*}%
where $a$ is a finite subset of $\omega $ are defined, then the \textup{FP}%
-set $\mathop{\mathrm{FP}}\nolimits(\vec{x})$ is the set of all such
products. A subset of $P$ is an \textup{IP}-set if it contains an \textup{FP}%
-set. Analogous to the case of semigroups, an ultrafilter $p$ on a partial
semigroup $P$ is \emph{strongly productive} if it is has a basis of \textup{%
FP}-sets. A strongly productive ultrafilter is \emph{regular} if it contains
a set $B$ with the following property: If $\vec{x}$ is a sequence in $P$
such that all finite products from $\vec{x}$ are defined and belong to $B$,
then $x_{0}\mathop{\mathrm{FP}}\nolimits_{1}(\vec{x})$ does not belong to $p$%
.

\begin{definition}
\label{Definition: strongly regular}A partial semigroup $P$ is \emph{%
strongly \textup{IP}-regular} if for every sequence $\vec{x}$ in $P$ such
that all the finite products from $\vec{x}$ are defined the set%
\begin{equation*}
x_{0}\mathop{\mathrm{FP}}\nolimits\nolimits_{1}(\vec{x})
\end{equation*}%
is not an \textup{IP}-set.
\end{definition}

Since a subset of a partial semigroup is still a partial semigroup, we can
speak about strongly \textup{IP}-regular subsets of a partial semigroups,
which are just strongly \textup{IP}-regular partial semigroups with respect
to the induced partial semigroup structure.

\begin{definition}
\label{Definition: regular}A partial semigroup $P$ is \emph{\textup{IP}%
-regular} if $P\left\backslash E\left( P\right) \right. $ is the union of
finitely many strongly \textup{IP}-regular sets.
\end{definition}

It is immediate from the definition that a partial semigroup is \textup{IP}%
-regular whenever it is the union of finitely many \textup{IP}-regular
subsets (i.e.\ subsets which are \textup{IP}-regular partial semigroups with
respect to the induced partial semigroup structure).

\begin{remark}
\label{Remark: subsemigroup}Any subset of a (strongly) \textup{IP}-regular
partial semigroup is (strongly) \textup{IP}-regular. Any finite partial
semigroup is strongly \textup{IP}-regular.
\end{remark}

We will show in Section \ref{Section: abelian} that every cancellative
commutative semigroup is \textup{IP}-regular. The relevance of the notion of 
\textup{IP}-regularity stems from the fact that any \textup{IP}-regular
group satisfies the conclusion of Theorem \ref{Theorem: superstrongly}. The
same is in fact true for any \textup{IP}-regular partial semigroup with
finitely many idempotent elements.

\begin{lemma}
\label{Lemma: IP-regular superstrongly}If $S$ is an \textup{IP}-regular
partial semigroup with finitely many idempotent elements, then every
nonprincipal strongly productive ultrafilter on $S$ is regular.
\end{lemma}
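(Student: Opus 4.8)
The plan is to show that a nonprincipal strongly productive ultrafilter $p$ on $S$ contains an element $B$ witnessing regularity, by combining the $\mathrm{IP}$-regular decomposition of $S \setminus E(S)$ with the fact that $p$ concentrates on that set. First I would observe that since $S$ has only finitely many idempotents and $p$ is nonprincipal, $p$ avoids every singleton, so $E(S) \notin p$ and hence $S \setminus E(S) \in p$. Now write $S \setminus E(S) = C_1 \cup \cdots \cup C_k$ with each $C_i$ strongly $\mathrm{IP}$-regular. Since $p$ is an ultrafilter, some $C_i \in p$; fix that $i$ and set $B = C_i$. The claim will be that this $B$ witnesses regularity of $p$ in the sense of Definition~\ref{Definition: regular}/the partial-semigroup version preceding Definition~\ref{Definition: strongly regular}.

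**Next I would verify the witnessing property.** Let $\vec{x}$ be a sequence in $S$ with all finite products defined and $\mathop{\mathrm{FP}}\nolimits(\vec{x}) \subseteq B = C_i$. Since $C_i$ is strongly $\mathrm{IP}$-regular and is itself a partial semigroup under the induced operation, Definition~\ref{Definition: strongly regular} applied inside $C_i$ tells us that $x_0 \mathop{\mathrm{FP}}\nolimits_1(\vec{x})$ is not an $\mathrm{IP}$-set — more precisely, it contains no $\mathrm{FP}$-set. But $p$ is strongly productive, so if $x_0\mathop{\mathrm{FP}}\nolimits_1(\vec{x})$ were in $p$ it would have to contain an $\mathrm{FP}$-set lying in $p$, contradicting that it contains no $\mathrm{FP}$-set at all. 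Hence $x_0\mathop{\mathrm{FP}}\nolimits_1(\vec{x}) \notin p$, which is exactly the regularity condition for $B$.

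**There is one subtlety to address carefully**, and I expect it to be the main (minor) obstacle: when we pass to $C_i$ as a partial subsemigroup, the condition ``all finite products from $\vec{x}$ are defined'' must be interpreted relative to $C_i$'s partial operation. But in our situation $S$ is an honest semigroup (or the ambient partial semigroup), all finite products of $\vec x$ are genuinely defined, and they all land in $C_i$ by hypothesis; so every finite product is defined \emph{and} in $C_i$, meaning $\vec x$ is a legitimate witnessing sequence for the strong $\mathrm{IP}$-regularity of $C_i$. One should also note that $\mathop{\mathrm{FP}}\nolimits_1(\vec x) \subseteq \mathop{\mathrm{FP}}\nolimits(\vec x) \subseteq C_i$, so $x_0\mathop{\mathrm{FP}}\nolimits_1(\vec x)$ is a subset of $S$ to which the ``not an $\mathrm{IP}$-set in $C_i$'' conclusion applies verbatim — and since being an $\mathrm{IP}$-set in the subsemigroup $C_i$ is \emph{weaker} than (or at any rate compatible with) containing an $\mathrm{FP}$-set of $S$ that lies in $C_i$, the argument with strong productivity goes through. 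I would spell out that last compatibility in a sentence, since it is the only place the partial-semigroup bookkeeping could go wrong; everything else is the routine ``ultrafilter hits one piece of a finite partition'' move.
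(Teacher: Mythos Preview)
Your proposal is correct and follows essentially the same approach as the paper: use nonprincipality plus finiteness of $E(S)$ to get $S\setminus E(S)\in p$, pick a strongly \textup{IP}-regular piece $C_i$ of the finite cover that lies in $p$, and take $B=C_i$ as the regularity witness. The paper compresses your second and third paragraphs into the single phrase ``It is clear that $A$ witnesses the fact that $p$ is regular,'' whereas you have unpacked the verification (including the observation that $x_0\mathop{\mathrm{FP}}\nolimits_1(\vec{x})\subseteq\mathop{\mathrm{FP}}\nolimits(\vec{x})\subseteq C_i$, which is indeed the point that makes the partial-semigroup bookkeeping trivial).
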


\begin{proof}
Suppose that $p$ is a strongly summable ultrafilter on $S$. Since $p$ is
nonprincipal, $S\left\backslash E(S)\right. \in p$. Given that $S$ is 
\textup{IP}-regular, the ultrafilter $p$ must have a strongly \textup{IP}%
-regular member $A$. It is clear that $A$ witnesses the fact that $p$ is
regular.
\end{proof}

The class of \textup{IP}-regular partial semigroups has interesting closure
properties. We have already observed that a subset of an \textup{IP}-regular
partial semigroup is \textup{IP}-regular. Moreover the inverse image of an 
\textup{IP}-regular partial semigroup under a partial homomorphism is 
\textup{IP}-regular. Recall that a \emph{partial homomorphism }from a
partial semigroup $P$ to a partial semigroup $Q$ is a function $%
f:P\rightarrow Q$ such that for every $a,b\in P$ such that $ab $ is defined, 
$f(a)f(b)$ is defined and $f(ab)=f(a)f(b)$

\begin{lemma}
\label{Lemma: inverse image}Suppose that $f:P\rightarrow Q$ is a partial
homomorphism. If $Q$ is \textup{IP}-regular and $f^{-1}\left[ E\left(
Q\right) \right] $ is \textup{IP}-regular, then $P$ is \textup{IP}-regular.
\end{lemma}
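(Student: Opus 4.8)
The plan is to exhibit, straight from the definition, a decomposition of $P\setminus E(P)$ into finitely many strongly \textup{IP}-regular subsets of $P$. Using that $Q$ is \textup{IP}-regular, write $Q\setminus E(Q)=B_{1}\cup\dots\cup B_{n}$ where each $B_{i}$ is a strongly \textup{IP}-regular subset of $Q$. Note that an element $a$ of a subset $T$ of $P$ is idempotent in $T$ precisely when it is idempotent in $P$, since the operation of $T$ is the restriction of that of $P$ and, when $a\cdot a$ equals $a$, it automatically lies in $T$; hence $E(f^{-1}[E(Q)])=f^{-1}[E(Q)]\cap E(P)$, so \textup{IP}-regularity of $f^{-1}[E(Q)]$ gives $f^{-1}[E(Q)]\setminus E(P)=C_{1}\cup\dots\cup C_{m}$ where each $C_{j}$ is a strongly \textup{IP}-regular subset of $f^{-1}[E(Q)]$. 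Since passing to an induced partial semigroup structure is transitive, the structure induced on $C_{j}$ from $f^{-1}[E(Q)]$ coincides with the one induced from $P$, so each $C_{j}$ is also a strongly \textup{IP}-regular subset of $P$ (Remark \ref{Remark: subsemigroup}). Because $f$ is total we have $P=f^{-1}[B_{1}]\cup\dots\cup f^{-1}[B_{n}]\cup f^{-1}[E(Q)]$, and therefore
\begin{equation*}
P\setminus E(P)=\bigcup_{i=1}^{n}\left(f^{-1}[B_{i}]\setminus E(P)\right)\cup\bigcup_{j=1}^{m}C_{j}.
\end{equation*}

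It thus suffices to prove that $f^{-1}[B]$ is a strongly \textup{IP}-regular subset of $P$ whenever $B$ is a strongly \textup{IP}-regular subset of $Q$: granting this, each $f^{-1}[B_{i}]\setminus E(P)$ is strongly \textup{IP}-regular as a subset of the strongly \textup{IP}-regular partial semigroup $f^{-1}[B_{i}]$ (Remark \ref{Remark: subsemigroup}), and the displayed identity exhibits $P$ as \textup{IP}-regular. To prove the claim, let $\vec{x}$ be a sequence in $f^{-1}[B]$ all of whose finite products are defined and set $\vec{y}=(f(x_{n}))_{n\in\omega}$. Since $f$ is a partial homomorphism and every finite product from $\vec{x}$ is defined, an induction on the cardinality of $a$ shows that $\prod_{i\in a}f(x_{i})$ is defined and equals $f(\prod_{i\in a}x_{i})$ for every nonempty finite $a\subseteq\omega$; hence $\mathop{\mathrm{FP}}\nolimits(\vec{y})=f[\mathop{\mathrm{FP}}\nolimits(\vec{x})]\subseteq B$, and strong \textup{IP}-regularity of $B$ yields that $y_{0}\mathop{\mathrm{FP}}\nolimits_{1}(\vec{y})$ is not an \textup{IP}-set in $Q$. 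Now assume toward a contradiction that $x_{0}\mathop{\mathrm{FP}}\nolimits_{1}(\vec{x})$ is an \textup{IP}-set in $P$, and fix a sequence $\vec{z}$ in $P$ with all finite products defined and $\mathop{\mathrm{FP}}\nolimits(\vec{z})\subseteq x_{0}\mathop{\mathrm{FP}}\nolimits_{1}(\vec{x})$. Every product $x_{0}w$ with $w\in\mathop{\mathrm{FP}}\nolimits_{1}(\vec{x})$ is defined and lies in $\mathop{\mathrm{FP}}\nolimits(\vec{x})$, so $f(x_{0}w)=f(x_{0})f(w)=y_{0}f(w)$ with $f(w)\in\mathop{\mathrm{FP}}\nolimits_{1}(\vec{y})$; consequently $\mathop{\mathrm{FP}}\nolimits((f(z_{n}))_{n\in\omega})=f[\mathop{\mathrm{FP}}\nolimits(\vec{z})]\subseteq y_{0}\mathop{\mathrm{FP}}\nolimits_{1}(\vec{y})$, so $y_{0}\mathop{\mathrm{FP}}\nolimits_{1}(\vec{y})$ is an \textup{IP}-set in $Q$, a contradiction. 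This proves the claim.

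The only genuinely delicate point, and the one I expect to require the most care in the write-up, is the bookkeeping forced by the partiality of the operations: one must check that a partial homomorphism sends a sequence all of whose finite products are defined to another such sequence and commutes with each of those products, and that $x_{0}\mathop{\mathrm{FP}}\nolimits_{1}(\vec{x})$ is contained in $\mathop{\mathrm{FP}}\nolimits(\vec{x})$ so that applying $f$ legitimately distributes $x_{0}$ over $\mathop{\mathrm{FP}}\nolimits_{1}(\vec{x})$. Everything else reduces to the set manipulation displayed above, the transitivity of passing to induced partial semigroup structures, and Remark \ref{Remark: subsemigroup}.
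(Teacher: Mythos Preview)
Your proof is correct and follows essentially the same approach as the paper: the key observation in both is that preimages under a partial homomorphism of strongly \textup{IP}-regular subsets of $Q\setminus E(Q)$ are strongly \textup{IP}-regular (equivalently, that partial homomorphisms send \textup{IP}-sets to \textup{IP}-sets), after which the decomposition of $P\setminus E(P)$ is routine. The paper compresses this into three sentences; you have simply spelled out the bookkeeping, including the verification that $E(f^{-1}[E(Q)])=E(P)$, which the paper leaves implicit.
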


\begin{proof}
Observe that the image of an \textup{IP}-set under a partial homomorphism is
an \textup{IP}-set. It follows that $f^{-1}\left[ B\right] $ is strongly 
\textup{IP}-regular whenever $B\subset Q\left\backslash E\left( Q\right)
\right. $ is strongly \textup{IP}-regular. The fact that $P$ is \textup{IP}%
-regular follows easily from these observations.
\end{proof}

In particular Lemma \ref{Lemma: inverse image} guarantees that the extension
of an \textup{IP}-regular group by an \textup{IP}-regular group is \textup{IP%
}-regular. More generally one can consider groups admitting a subnormal
series with \textup{IP}-regular factor groups. Recall that a subnormal
series of a group $G$ is a finite sequence $(a)$ of subgroups of $G$ such
that $A_{0}=\left\{ 1_{G}\right\} $, $A_{n}=G$, and $A_{i}\unlhd A_{i+1}$
for every $i\in n$. The quotients $A_{i+1}\left/ A_{i}\right. $ for $i\in n$
are called \emph{factor groups }of the series. Proposition \ref{Proposition:
subnormal series IP} can be easily obtained from Lemma \ref{Lemma: inverse
image} by induction on the length of the subnormal series.

\begin{proposition}
\label{Proposition: subnormal series IP}Suppose that $G$ is a group, and $%
(A_i)_{i\in n} $ is a subnormal series of $G$. If the factor groups $%
A_{i+1}\left/ A_{i}\right. $ are \textup{IP}-regular for all $i\in n$, then $%
G$ is \textup{IP}-regular.
\end{proposition}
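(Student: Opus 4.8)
The plan is to prove Proposition \ref{Proposition: subnormal series IP} by induction on the length $n$ of the subnormal series, using Lemma \ref{Lemma: inverse image} as the inductive step. The base case $n=1$ is immediate: then $G = A_1/A_0 = G/\{1_G\}$ is itself one of the factor groups, hence \textup{IP}-regular by hypothesis. (The degenerate case $n=0$, where $G = \{1_G\}$ is trivial, is also clear since a finite partial semigroup is strongly \textup{IP}-regular by Remark \ref{Remark: subsemigroup}.)

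For the inductive step, suppose the result holds for all groups admitting a subnormal series of length $n$, and let $G$ have a subnormal series $(A_i)_{i\in n+1}$ with $A_0 = \{1_G\}$, $A_{n+1} = G$, $A_i \unlhd A_{i+1}$, and all factor groups $A_{i+1}/A_i$ \textup{IP}-regular. Consider the normal subgroup $A_n \unlhd G$ and the quotient homomorphism $f\colon G \to G/A_n$. First I would observe that $(A_i)_{i \in n+1}$, truncated at $A_n$, gives a subnormal series $A_0 \unlhd A_1 \unlhd \cdots \unlhd A_n$ of $A_n$ of length $n$ with the same factor groups $A_{i+1}/A_i$ for $i \in n$, all of which are \textup{IP}-regular; hence by the inductive hypothesis $A_n$ is \textup{IP}-regular. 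Next, $f^{-1}[E(G/A_n)] = f^{-1}[\{1_{G/A_n}\}] = A_n$ since a group has exactly one idempotent, so the set whose \textup{IP}-regularity Lemma \ref{Lemma: inverse image} requires is precisely $A_n$, which we have just shown is \textup{IP}-regular. Finally, $G/A_n = A_{n+1}/A_n$ is a factor group of the series, hence \textup{IP}-regular by hypothesis. Applying Lemma \ref{Lemma: inverse image} to $f\colon G \to G/A_n$ with $Q = G/A_n$ now yields that $G = P$ is \textup{IP}-regular, completing the induction.

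I do not anticipate any serious obstacle: the only subtlety is the bookkeeping that ensures the truncated series is a legitimate subnormal series of $A_n$ with the correct factor groups, and the observation that the preimage of the identity of a quotient group is exactly the kernel, which is standard. If anything, the mildly delicate point is making sure the indexing conventions line up (that $A_n \unlhd A_{n+1} = G$ makes $f$ well defined, and that $A_{i+1}/A_i$ for $i \in n$ is unambiguously a factor group of both the original series and the truncated one), but this is routine.
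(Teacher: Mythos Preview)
Your proof is correct and follows precisely the approach indicated in the paper: induction on the length of the subnormal series, with Lemma \ref{Lemma: inverse image} supplying the inductive step via the quotient map $G \to G/A_n$. The paper itself only states that the proposition ``can be easily obtained from Lemma \ref{Lemma: inverse image} by induction on the length of the subnormal series,'' so your write-up is in fact more detailed than what appears there.
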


A consequence of Proposition \ref{Proposition: subnormal series IP} is that
solvable groups are \textup{IP}-regular. This follows from the facts that
solvable groups are exactly those that admit a subnormal series where all
the factor groups are abelian, and that all abelian groups are \textup{IP}%
-regular. The latter fact will be proved in the following section.

\section{Cancellative commutative semigroups\label{Section: abelian}}

Throughout this section all (partial) semigroups are \emph{additively
denoted }and assumed to be \emph{cancellative }and \emph{commutative}.

\begin{proposition}
\label{Proposition: abelian}Every cancellative commutative semigroup is 
\textup{IP}-regular (as in Definition \ref{Definition: regular}).
\end{proposition}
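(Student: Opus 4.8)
The plan is to reduce the statement to a concrete structural fact about cancellative commutative semigroups, namely that such a semigroup embeds into a group (its group of fractions), and then exploit the well-understood structure of finite sums sets in abelian groups. Writing the operation additively, a cancellative commutative semigroup $S$ sits inside the abelian group $G = S \times S / {\sim}$ of formal differences; since $E(G)=\{0\}$ and $0 \notin S$ (as $S$ is cancellative and we may assume $S$ nontrivial, the principal case being handled by \emph{strongly \textup{IP}-regular} being trivial for finite $P$ via Remark \ref{Remark: subsemigroup}), by Remark \ref{Remark: subsemigroup} and Lemma \ref{Lemma: inverse image} it suffices to prove that $G\setminus\{0\}$ is a finite union of strongly \textup{IP}-regular subsets of $G$. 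So the real content is: \emph{every abelian group is \textup{IP}-regular}, which is exactly the fact promised at the end of Section \ref{Section: regular}.

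To show an abelian group $G$ is \textup{IP}-regular, I would split $G\setminus\{0\}$ according to the order of elements. First consider the torsion-free part: I claim that for any sequence $\vec{x}$ in $G$ with all finite sums defined (here always defined since $G$ is total), the set $x_0 + \fs_1(\vec x)$ contains no \textup{FS}-set \emph{provided} $x_0$ has infinite order — indeed, by a standard argument (as in \cite[Lemma 2.1]{Hin-Prot-Strauss} or the arguments around \cite[Theorem 12.19]{Hindman-Strauss}), if $\fs(\vec y)\subseteq x_0+\fs_1(\vec x)$ then one derives that every element of $\fs(\vec y)$ equals $x_0$ plus a sum of the $x_i$ with $i\geq 1$, and comparing $y_0$, $y_0+y_1$, $y_1$ forces a relation that, after cancellation in the torsion-free quotient, is impossible. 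Thus $\{x\in G : x \text{ has infinite order}\}$, call it $G_\infty$, is a strongly \textup{IP}-regular set. Second, for each prime power $n$ (or each $n\geq 2$), the set $G_n = \{x \in G : \mathrm{ord}(x)=n\}$: here one uses that an \textup{FS}-set $\fs(\vec y)\subseteq x_0+\fs_1(\vec x)$ would have to be contained in a single coset of a torsion subgroup, and a counting/pigeonhole on how often $x_0$ can be ``cancelled'' among $y_0, y_0+y_1, \dots$ leads to a contradiction — this is the heart of \cite[Theorem 2.3]{Hin-Prot-Strauss}. The point is that $G\setminus\{0\} = G_\infty \cup \bigcup_{n} G_n$, but this is infinitely many pieces, so I would instead group the finite-order elements more cleverly.

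The key reorganization — and I expect this to be the main obstacle — is to cover $G\setminus\{0\}$ by \emph{finitely many} strongly \textup{IP}-regular pieces despite there being infinitely many torsion orders. The standard trick from \cite{Hin-Prot-Strauss} is: fix a prime $p$, and observe that $G\setminus\{0\}$ is the union of the set of elements not in $pG + (\text{torsion of order prime to } p)$ together with controlled coset pieces; more concretely, one writes $G\setminus\{0\}$ as the union of $\{x : x\notin 2G\}$, $\{x : x\notin 3G\}$, and so on is again infinite, so instead I would use the observation that for a fixed prime $p$ the three sets $A_0 = \{x : px \neq 0,\ x\notin pG\}$, $A_1 = pG\setminus\{0\}$, and (the torsion part) can be iterated, but to keep finiteness one invokes that it suffices to handle $G$ of the form $\mathbb{Z}$, $\mathbb{Z}(p^\infty)$, $\mathbb{Z}/p^k\mathbb{Z}$ and finite sums thereof via Lemma \ref{Lemma: inverse image} applied to the projections. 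Concretely: the cleanest route is to prove directly that for a \emph{single} prime $p$, the set $\{x\in G : x\notin pG\}$ and the set $pG\setminus\{0\}$ together with a bounded-torsion remainder cover $G\setminus\{0\}$, and that $\{x : x\notin pG\}$ is strongly \textup{IP}-regular by the cancellation argument in $G/pG$ (an $\mathbb{F}_p$-vector space, where \textup{FS}-sets are very rigid), then recurse on $pG$; the recursion terminates in finitely many steps exactly when one restricts attention, via Lemma \ref{Lemma: inverse image} and finitely generated-ness arguments, to the relevant finitely many primes. I would therefore structure Section \ref{Section: abelian}'s proof as: (1) pass to the group of fractions; (2) prove the $\mathbb{F}_p$-vector space lemma that $\{x\notin pG\}$ is strongly \textup{IP}-regular; (3) prove $G_\infty$ (infinite order elements) is strongly \textup{IP}-regular; (4) assemble a finite cover of $G\setminus\{0\}$ using (2), (3), and the divisibility filtration, citing \cite[proof of Theorem 2.3]{Hin-Prot-Strauss} for the combinatorial bookkeeping that makes the number of pieces finite. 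The subtle point to get right, and where I would spend the most care, is verifying that the cover is genuinely finite and that each piece is strongly \textup{IP}-regular \emph{as a partial sub-semigroup} of $G$ in the precise sense of Definition \ref{Definition: strongly regular}, i.e.\ that the ``bad'' \textup{FS}-set would have to live inside the piece itself, not merely in $G$.
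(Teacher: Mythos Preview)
Your proposal has a genuine error at step (3): the set $G_\infty$ of infinite-order elements is \emph{not} strongly \textup{IP}-regular. Take $G=\mathbb{Z}$ and $x_n=1$ for all $n$; every finite sum is a positive integer, so the sequence lives entirely in $G_\infty=\mathbb{Z}\setminus\{0\}$, yet
\[
x_0+\fs\nolimits_1(\vec{x})=\{2,3,4,\dots\}
\]
contains $\fs((2,2,2,\dots))=\{2,4,6,\dots\}$ and is therefore an \textup{IP}-set. Your sketch ``comparing $y_0$, $y_0+y_1$, $y_1$ forces a relation that is impossible in the torsion-free quotient'' does not go through: there is nothing contradictory about $y_0=y_1=2$ and $y_0+y_1=4$ all lying in $1+\fs_1(\vec{x})$. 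So even a torsion-free group such as $\mathbb{Z}$ fails to be strongly \textup{IP}-regular as a single piece, and your decomposition $G\setminus\{0\}=G_\infty\cup(\text{torsion})$ does not get off the ground.

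The second gap you already flag yourself: the recursion $G\supset pG\supset p^2G\supset\cdots$ need not terminate (think of $\mathbb{Q}$ or $\mathbb{Z}(p^\infty)$, where $pG=G$), and passing to a different prime at each stage requires infinitely many primes in general. Citing \cite{Hin-Prot-Strauss} does not rescue this: that paper proves every strongly summable ultrafilter on an abelian group is idempotent, working one ultrafilter at a time, and does \emph{not} produce a single finite partition of $G\setminus\{0\}$ into strongly \textup{IP}-regular pieces as Definition \ref{Definition: regular} demands.

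The paper's route is quite different. It embeds the semigroup not just into its group of fractions but all the way into $(\mathbb{R}/\mathbb{Z})^{\oplus\kappa}$, and then gives an explicit partition of that group minus $\{0\}$ into fifteen pieces: the order-$2$ elements (handled via $\mathbb{F}_2$-linear algebra), two pieces $C_1,C_3$ determined by whether the first ``interesting'' coordinate equals $\frac14$ or $\frac34$, and twelve pieces $Q_{i,j}$ coming from a dyadic partition of the remaining coordinate values. Each piece is shown to be strongly \textup{IP}-regular via the auxiliary notion of a \emph{rank function} (a well-order-valued map that is finite-to-one on sequences and kills translated \textup{FS}-sets). This coordinate-and-dyadic-interval machinery is what replaces your order/prime filtration and is what makes the number of pieces finite and uniform in $G$.
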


A key role in the proof of Proposition \ref{Proposition: abelian} is played
by the notion of rank function.

\begin{definition}
\label{Definition: rank function}A \emph{rank function} on a cancellative
commutative partial semigroup $P$ is a function $\rho$ from $P$ to a well-ordered set with the property that if $\vec{x}$ is a sequence in $P$ such
that all finite sums from $\vec{x}$ are defined, then the two following
conditions are satisfied:

\begin{enumerate}
\item The restriction of $\rho$ to the range $\{x_n\big|n\in\omega\}$ of the
sequence $\vec{x}$ is a finite-to-one function, and

\item if $\rho \left( x_{n}\right) \geq \rho (x_{0})$ for every $n\in \omega 
$ then $x_{0}+\mathop{\mathrm{FS}}\nolimits_{1}(\vec{x})$ is not an \textup{%
IP}-set.
\end{enumerate}
\end{definition}

Remark \ref{Remark: particular rank function} provides an example of a rank
function.

\begin{remark}
\label{Remark: particular rank function}If $\rho $ is a function from $P$ to
a well order such that $\rho \left( x+y\right) =\min \left\{ \rho (x),\rho
(y)\right\} $ and $\rho (x)\neq \rho (y)$ whenever $x,y\in P$ and $x+y$ is
defined, then $\rho $ is a rank function on $A$.
\end{remark}

The relevance of rank functions for the proof of Proposition \ref%
{Proposition: abelian} is stated in Lemma \ref{Lemma: rank function}. Recall
that the family of \textup{IP}-sets of a (partial) semigroup $P$ is
partition regular (see \cite[Corollary 5.15]{Hindman-Strauss}). This means
that if $\mathfrak{F} $ is a finite family of subsets of $P$ and $\bigcup 
\mathfrak{F}$ is an \textup{IP}-set, then $\mathfrak{F}$ contains an \textup{%
IP}-set. This fact will be used in the proof of Lemmas \ref{Lemma: rank
function} and \ref{Lemma: C1}

\begin{lemma}
\label{Lemma: rank function}If there is a rank function on a partial
semigroup $P$, then $P$ is strongly \textup{IP}-regular.
\end{lemma}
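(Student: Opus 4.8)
The goal is to show: if $P$ carries a rank function $\rho$, then for every sequence $\vec{x}$ in $P$ with all finite products defined, $x_0 + \fs_1(\vec{x})$ is not an \textup{IP}-set. Suppose toward a contradiction that it is. Then there is a sequence $\vec{y}$ with all finite sums defined and $\fs(\vec{y}) \subseteq x_0 + \fs_1(\vec{x})$. The plan is to extract from $\vec{y}$ a subsequence on which the values of $\rho$ are all $\ge \rho(x_0)$, so that condition (2) of the definition of rank function applies and yields a contradiction; but before that one must pass to an appropriate ``combined'' sequence so that (2) can actually be invoked.

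First I would observe that, since $\fs(\vec{y}) \subseteq x_0 + \fs_1(\vec{x})$, for each nonempty finite $a \subseteq \omega$ we can write $\sum_{i \in a} y_i = x_0 + \sum_{j \in b_a} x_j$ for some finite $b_a \subseteq \omega \setminus \{0\}$. Applying this to singletons $a = \{n\}$, each $y_n = x_0 + (\text{something in } \fs_1(\vec{x}))$. Using cancellativity, I would like to compare $\rho(y_n)$ with $\rho(x_0)$. The key point: an element of the form $x_0 + z$ should have rank related to $\rho(x_0)$ — in fact I expect the argument shows $\rho(y_n) \ge \rho(x_0)$ is forced (or can be forced on a subsequence), because if infinitely many $y_n$ had rank $< \rho(x_0)$, one could try to run condition (2) directly on (a reindexing of) the $y$'s; and if only finitely many do, discard them. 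Here is where I would use the finite-to-one clause (1): it guarantees that $\rho$ takes infinitely many distinct values along $\vec{y}$ (and along $\vec{x}$), so ``rank $< \rho(x_0)$'' can only be satisfied by finitely many members of the range, hence we can throw those finitely many away and reindex to get a subsequence $\vec{y}\,'$ with $\fs(\vec{y}\,') \subseteq \fs(\vec{y})$, all finite sums defined, and $\rho(y'_n) \ge \rho(x_0)$ for every $n$ — but we still need $\rho(y'_n) \ge \rho(y'_0)$ to apply (2) to $\vec{y}\,'$ itself, which is not what we have.

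The fix is to apply condition (2) not to $\vec{y}$ but to the interleaved sequence. Consider the sequence $\vec{z}$ obtained by putting $z_0 = x_0$ and letting the remaining terms enumerate a sufficiently sparse ``product subsystem'' of $\vec{x}$ whose finite sums, together with $x_0$, are all defined and lie in a controlled set; more precisely, I would build $\vec{z}$ so that $\fs(\vec{z}) \subseteq \fs(\vec{x})$ with $z_0 = x_0$, and so that $\rho(z_n) \ge \rho(x_0) = \rho(z_0)$ for all $n$ — this uses clause (1) to discard the finitely many $x_j$ of rank below $\rho(x_0)$ and then reindex. Now $\vec{z}$ satisfies the hypothesis of condition (2), so $z_0 + \fs_1(\vec{z}) = x_0 + \fs_1(\vec{z})$ is not an \textup{IP}-set. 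On the other hand $\fs_1(\vec{z}) \subseteq \fs_1(\vec{x})$ (as $\vec z$ starts with $x_0$ and continues inside the later part of $\vec x$), and one checks that $\fs(\vec{y})$, being contained in $x_0 + \fs_1(\vec{x})$, can be thinned — using partition regularity of \textup{IP}-sets — to an \textup{IP}-subset sitting inside $x_0 + \fs_1(\vec{z})$; this is the step that needs care and is, I expect, the main obstacle: coordinating the combinatorial structure of $\vec{y}$ with the subsystem $\vec{z}$ so that the inclusion into $x_0 + \fs_1(\vec z)$ genuinely holds on an \textup{IP}-set. Once that inclusion is in hand, $x_0 + \fs_1(\vec{z})$ contains an \textup{IP}-set, contradicting the conclusion of (2). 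This contradiction shows $x_0 + \fs_1(\vec{x})$ is not an \textup{IP}-set, i.e.\ $P$ is strongly \textup{IP}-regular, and invoking partition regularity once more (to split $\fs(\vec y)$ according to which $\rho$-value region its terms land in) is what legitimizes the passage to the well-behaved subsystems throughout.
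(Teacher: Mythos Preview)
Your proposal has a real gap at exactly the point you flag as ``the main obstacle.'' Granting that only finitely many $x_j$ have rank below $\rho(x_0)$ (the paper uses the same fact, phrased as the existence of a non-decreasing reindexing of $\vec{x}$), one has
\[
x_0 + \fs\nolimits_1(\vec{x}) \;=\; \bigcup_{b} \Bigl( x_0 + \sum_{j \in b} x_j + \fs\nolimits_1(\vec{z}) \Bigr) \;\cup\; (\text{a finite set}),
\]
where $b$ ranges over subsets of the discarded index set. Partition regularity tells you only that \emph{some} cell of this cover contains an \textup{IP}-set; to force that cell to be the one with $b=\varnothing$ (which clause (2) applied to $\vec{z}$ does kill), you must already know that each cell with $b\neq\varnothing$ fails to be an \textup{IP}-set. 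You give no argument for this, and clause (2) does not apply to the sequence $\bigl(x_0+\sum_{j\in b}x_j,\,z_1,\,z_2,\ldots\bigr)$, since its leading term is a compound sum rather than a single term of the original sequence known to have minimal rank. Your closing remark about splitting $\fs(\vec{y})$ by ``$\rho$-value region'' does not touch this: the obstruction is in the translate, not in the $y_n$'s.

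The paper's proof supplies exactly the idea you are missing, and it does so without any contradiction argument or auxiliary witness $\vec{y}$. It reindexes $\vec{x}$ in non-decreasing order of rank as $(w_n)_{n\in\omega}$ (this is where finite-to-one is invoked), locates $x_0=w_{n_0}$, and decomposes $x_0+\fs_1(\vec{x})$ into finitely many pieces of the form $w_{n_0}+\sum_{i\in a}w_i+\fs_{n_0+1}(\vec{w})$ with $a\subseteq n_0$ (together with a finite set). The key observation is the containment
\[
w_{n_0}+\sum_{i\in a}w_i+\fs\nolimits_{n_0+1}(\vec{w}) \;\subseteq\; w_m+\fs\nolimits_{m+1}(\vec{w}),\qquad m=\min(a):
\]
now the leading term $w_m$ is a single term of the sorted sequence with $\rho(w_m)\leq\rho(w_k)$ for every $k>m$, so clause (2) applied to $(w_{m+k})_{k\in\omega}$ shows that piece is not an \textup{IP}-set. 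Your ``discard and thin'' strategy never produces this absorption of a translated piece into a tail whose first term has minimal rank, and without it the argument does not close.
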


\begin{proof}
Fix a rank function $\rho $ from $P$ to a well-ordered set. Suppose that $%
\vec{x}$ is a sequence in $P$ such that all the finite sums are defined. We
claim that $x_{0}+\mathop{\mathrm{FS}}\nolimits_{1}(\vec{x})$ is not an 
\textup{IP}-set. Since $\rho$ restricted to the range of $\vec{x}$ is
finite-to-one, it is possible to pick a permutation $\sigma $ of $\omega $
such that%
\begin{equation*}
\rho \left( x_{\sigma \left( n\right) }\right) \leq \rho \left( x_{\sigma
\left( m\right) }\right)
\end{equation*}%
for every $n\in m\in \omega $. Define $y_{n}=x_{\sigma \left( n\right) }$
and $\vec{y}=\left( y_{n}\right) _{n\in \omega }$. Observe that if $%
x_{0}=y_{n_{0}}$ then%
\begin{eqnarray*}
&&x_{0}+\mathop{\mathrm{FS}}\nolimits\nolimits_{1}(\vec{x}) \\
&=&\left( y_{n_{0}}+\mathop{\mathrm{FS}}\nolimits\nolimits_{n_{0}+1}(\vec{y}%
)\right) \cup \left( y_{n_{0}}+\mathop{\mathrm{FS}}\nolimits\left( \left(
y_{i}\right) _{i=0}^{n_{0}-1}\right) \right) \cup \\
&&\cup\left( y_{n_{0}}+\mathop{\mathrm{FS}}\nolimits\left( \left(
y_{i}\right) _{i=0}^{n_{0}-1}\right) +\mathop{\mathrm{FS}}%
\nolimits\nolimits_{n_{0}+1}(\vec{y})\right) \text{.}
\end{eqnarray*}%
Applying the hypothesis that $\rho $ is a rank function to the sequence $(y_{%
{n_0}+k})_{k\in\omega}$, it follows that%
\begin{equation*}
y_{n_{0}}+\mathop{\mathrm{FS}}\nolimits\nolimits_{n_{0}+1}(\vec{y})
\end{equation*}%
is not an \textup{IP}-set. Now for every $y=\sum_{i\in a}y_i\in %
\mathop{\mathrm{FS}}\nolimits\left( \left( y_{i}\right) _{i\in n_{0}}\right) 
$, if $m=\min(a)$ then we have that 
\begin{equation*}
y_{n_{0}}+y+\mathop{\mathrm{FS}}\nolimits\nolimits_{n_{0}+1}(\vec{y})\subset
y_m+\mathop{\mathrm{FS}}\nolimits_{m+1}(\vec{y})
\end{equation*}%
and since the latter is not an \textup{IP}-set (by applying the fact that $%
\rho$ is a rank function to the sequence $(y_{m+k})_{k\in\omega}$), neither
is the former. Finally%
\begin{equation*}
y_{n_{0}}+\mathop{\mathrm{FS}}\nolimits\left( \left( y_{i}\right) _{i\in
n_{0}}\right)
\end{equation*}%
is finite and hence not an \textup{IP}-set. This allows one to conclude that%
\begin{equation*}
x_{0}+\mathop{\mathrm{FS}}\nolimits\nolimits_{1}(\vec{x})
\end{equation*}%
is not an \textup{IP}-set, as claimed.
\end{proof}

Denote in the following by $\mathbb{R}\left/ \mathbb{Z}\right. $ the
quotient of $\mathbb{R}$ by the subgroup $\mathbb{Z}$. It is a well known
fact that any commutative cancellative semigroup embeds into an abelian
group, see \cite[Proposition II.3.2]{Grillet-abelian}. Moreover any abelian
group can be embedded in a divisible abelian group, and a divisible abelian
group in turn can be embedded into a direct sum of copies of $\mathbb{R}%
\left/ \mathbb{Z}\right. $ (see for example \cite[Theorems 24.1 and 23.1]%
{Fuchs}). It follows that every cancellative commutative semigroup is a
subsemigroup of a direct sum $\left( \mathbb{R}\left/ \mathbb{Z}\right.
\right) ^{\oplus \kappa }$ of $\kappa $ copies of $\mathbb{R}\left/ \mathbb{Z%
}\right. $ for some cardinal $\kappa $. Therefore it is enough to prove
Proposition \ref{Proposition: abelian} for $\left( 
\mathbb{R}
\left/ 
\mathbb{Z}
\right. \right) ^{\oplus \kappa }$. The proof of this fact will occupy the
rest of this section.

Let us fix a cardinal $\kappa $. If $x\in \left( 
\mathbb{R}
\left/ 
\mathbb{Z}
\right. \right) ^{\oplus \kappa }$ and $\alpha\in \kappa $ then $\pi
_{\alpha}(x)$ denotes the $\alpha$-th coordinate of $x$. Elements of $%
\mathbb{R}
\left/ 
\mathbb{Z}
\right. $ will be freely identified with their representatives in $\mathbb{R}
$ (thus we might write something like $t\neq0$, and this really means $%
t\notin\mathbb{Z}$), and if we need to specify a particular representative,
we will choose the unique such in $\left[ 0,1\right) $. Consider the
partition%
\begin{equation*}
\left( 
\mathbb{R}
\left/ 
\mathbb{Z}
\right. \right) ^{\oplus \kappa }=C\cup B\cup \left\{ 0\right\}
\end{equation*}%
of $\left( 
\mathbb{R}
\left/ 
\mathbb{Z}
\right. \right) ^{\oplus \kappa }$, where $B$ is the set of elements of $%
\left( 
\mathbb{R}
\left/ 
\mathbb{Z}
\right. \right) ^{\oplus \kappa }$ of order $2$.

\begin{lemma}
The subset $B$ of $\left( 
\mathbb{R}
\left/ 
\mathbb{Z}
\right. \right) ^{\oplus \kappa }$ is strongly \textup{IP}-regular.
\end{lemma}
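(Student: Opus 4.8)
The plan is to exhibit a rank function on $B$ and invoke Lemma~\ref{Lemma: rank function}. The set $B$ consists of the elements of order $2$ in $\left(\mathbb{R}\left/\mathbb{Z}\right.\right)^{\oplus\kappa}$, that is, the nonzero $x$ all of whose coordinates $\pi_\alpha(x)$ lie in $\{0,\tfrac12\}$. For such an $x$ let $\mathrm{supp}(x)=\{\alpha\in\kappa:\pi_\alpha(x)=\tfrac12\}$, a finite nonempty subset of $\kappa$. Fixing any well-ordering of $\kappa$, define $\rho(x)=\min(\mathrm{supp}(x))\in\kappa$. I would check that $\rho$ satisfies the hypotheses of Remark~\ref{Remark: particular rank function}: whenever $x,y\in B$ and $x+y$ is defined (and lies in $B$, since $B$ is the induced partial semigroup), one has $\mathrm{supp}(x+y)=\mathrm{supp}(x)\,\triangle\,\mathrm{supp}(y)$, the symmetric difference.

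The first thing to establish is that $\rho(x)\neq\rho(y)$ whenever $x+y$ is defined. Here I would use that $B$ is merely a partial semigroup (a subset of the ambient group): one has to argue that if $\rho(x)=\rho(y)$ then $x+y\notin B$ so the sum is not defined in the induced structure. Indeed if $\alpha_0=\min(\mathrm{supp}(x))=\min(\mathrm{supp}(y))$, then $\alpha_0\notin\mathrm{supp}(x)\triangle\mathrm{supp}(y)=\mathrm{supp}(x+y)$, and $\min(\mathrm{supp}(x+y))>\alpha_0$ (or $\mathrm{supp}(x+y)=\varnothing$, i.e.\ $x+y=0\notin B$); either way $x+y$, though a well-defined element of the group, does not lie in $B$, hence is not defined in the partial semigroup $B$. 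Once disjointness of minima is known, $\mathrm{supp}(x)$ and $\mathrm{supp}(y)$ have different least elements, and $\min(\mathrm{supp}(x)\triangle\mathrm{supp}(y))=\min\{\min(\mathrm{supp}(x)),\min(\mathrm{supp}(y))\}$; this gives $\rho(x+y)=\min\{\rho(x),\rho(y)\}$.

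So the main point to verify is really the disjointness/minimum interaction for finite products from an arbitrary sequence, not just for a single sum: given a sequence $\vec{x}$ in $B$ all of whose finite sums are defined (and land in $B$), I would check by an easy induction that the supports $\mathrm{supp}(x_n)$ have pairwise distinct minima — apply the single-sum argument to partial sums — and hence $\rho$ restricted to the range of $\vec{x}$ is injective (in particular finite-to-one), giving condition~(1) of a rank function. The only subtlety worth a sentence is that different $x_n$ could coincide only if $\vec{x}$ is not injective, but distinct minima of supports forces the $x_n$ with those minima to be distinct; in any case finite-to-one is all that is needed and it follows because $\kappa$ is well-ordered and the map $x_n\mapsto\rho(x_n)$ is injective on the range.

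Given Remark~\ref{Remark: particular rank function}, $\rho$ is a rank function on $B$, and then Lemma~\ref{Lemma: rank function} immediately yields that $B$ is strongly \textup{IP}-regular. The only real obstacle is being careful about the distinction between "$x+y$ is defined in the ambient group $\left(\mathbb{R}\left/\mathbb{Z}\right.\right)^{\oplus\kappa}$" and "$x+y$ is defined in the partial semigroup $B$" — the hypothesis "$x+y$ is defined" in Remark~\ref{Remark: particular rank function} refers to the latter, and it is precisely this restriction (that the sum must stay inside $B$) that forces the minima of the supports to differ, which is what makes the remark applicable.
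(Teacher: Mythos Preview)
There is a genuine gap: your central claim that ``if $\rho(x)=\rho(y)$ then $x+y\notin B$'' is false. Take $\kappa=2$ and, identifying $(\mathbb{R}/\mathbb{Z})^{\oplus 2}$ with pairs, let $x=(\tfrac12,0)$ and $y=(\tfrac12,\tfrac12)$. Both lie in $B$, both have $\rho(x)=\rho(y)=0$, yet $x+y=(0,\tfrac12)$ is again an element of order~$2$, hence lies in $B$. Your parenthetical ``$\min(\mathrm{supp}(x+y))>\alpha_0$ \ldots\ either way $x+y$ does not lie in $B$'' is simply wrong: having a larger minimal support coordinate does not expel an element from $B$; it only means $\rho(x+y)>\alpha_0$. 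So Remark~\ref{Remark: particular rank function} does not apply to your $\rho$.

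Worse, your $\rho$ is not a rank function at all. Consider the sequence $x_0=e_0$, $x_n=e_0+e_n$ for $n\geq 1$ (where $e_\alpha$ has $\tfrac12$ in coordinate $\alpha$ and $0$ elsewhere). These are linearly independent over $\mathbb{Z}/2\mathbb{Z}$, so every finite sum is nonzero and hence in $B$; but $\rho(x_n)=0$ for every $n$, so condition~(1) of Definition~\ref{Definition: rank function} fails. Your inductive argument that the $\mathrm{supp}(x_n)$ have pairwise distinct minima rests on the same false single-sum claim. The paper avoids rank functions for $B$ entirely: it observes that $B\cup\{0\}\cong(\mathbb{Z}/2\mathbb{Z})^{\oplus\kappa}$, that $\fs(\vec{x})\subset B$ forces $\vec{x}$ to be linearly independent, and then that any two elements of $x_0+\fs_1(\vec{x})$ sum to an element whose (unique) index set omits $0$, so $x_0+\fs_1(\vec{x})$ contains no sum of two of its elements and is therefore not an \textup{IP}-set.
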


\begin{proof}
Observe that $B\cup \left\{ 0\right\} $ is a subgroup of $\left( 
\mathbb{R}
\left/ 
\mathbb{Z}
\right. \right) ^{\oplus \kappa }$ isomorphic to the direct sum of $\kappa $
copies of $%
\mathbb{Z}
\left/ 2%
\mathbb{Z}
\right. $. Thus $B\cup \left\{ 0\right\} $ has the structure of $\kappa $%
-dimensional vector space over $%
\mathbb{Z}
\left/ 2%
\mathbb{Z}
\right. $. If $\vec{x}$ is a sequence in $B$ then $\mathop{\mathrm{FS}}%
\nolimits(\vec{x})\cup \left\{ 0\right\} $ is the vector space generated by $%
\vec{x}$. Moreover the sequence $\vec{x}$ is linearly independent if and
only if $0\notin \mathop{\mathrm{FS}}\nolimits(\vec{x})$ (see \cite[%
Proposition 4.1]{David-2}). Thus if $\vec{x}$ is a sequence in $B$ such that 
$\mathop{\mathrm{FS}}\nolimits(\vec{x})\subset B$ then $\vec{x}$ is a
linearly independent sequence, and hence any element of $\mathop{\mathrm{FS}}%
\nolimits(\vec{x})$ can be written in a unique way as a sum of elements of
the sequence $\vec{x}$. In particular $x_0+\mathop{\mathrm{FS}}\nolimits_1(%
\vec{x})$ consists of those finite sums $\sum_{i\in a}x_i$ such that $0\in a$%
. Thus if $a,b$ are finite subsets of $\omega\left\backslash 1\right. $,
then so is $a\bigtriangleup b$, hence%
\begin{equation*}
x_{0}+\sum_{i\in a}x_{i}+x_{0}+\sum_{i\in b}x_{i}=\sum_{i\in a\bigtriangleup
b}x_{i}\notin x_{0}+\mathop{\mathrm{FS}}\nolimits\nolimits_{1}(\vec{x})\text{%
.}
\end{equation*}%
This shows that whenever $x,y\in x_{0}+\mathop{\mathrm{FS}}\nolimits_{1}(%
\vec{x})$ then $x+y\notin x_0+\mathop{\mathrm{FS}}\nolimits_1(\vec{x})$,
which implies that $x_0+\mathop{\mathrm{FS}}\nolimits_1(\vec{x})$ is not an 
\textup{IP}-set and $B$ is \textup{IP}-regular.
\end{proof}

It remains to show now that $C$ is \textup{IP}-regular. Elements $x\in C$
have order strictly greater than $2$, thus there is at least one $%
\alpha<\kappa$ such that $\pi_\alpha(x)\notin\left\{0,\frac{1}{2}\right\}$,
hence it is possible to define the function $\mu :C\rightarrow \kappa $ by%
\begin{equation*}
\mu (x)=\min \left\{ \alpha\in \kappa :\pi _{\alpha}(x)\notin \left\{ 0,%
\frac{1}{2}\right\} \right\} \text{.}
\end{equation*}%
Consider%
\begin{eqnarray*}
C_{1} &=&\left\{ x\in C:\pi _{\mu (x)}(x)=\frac{1}{4}\right\} \text{;} \\
C_{3} &=&\left\{ x\in C:\pi _{\mu (x)}(x)=\frac{3}{4}\right\} \text{;} \\
C_{2} &=&\left\{ x\in C:\pi _{\mu (x)}(x)\notin \left\{ \frac{1}{4},\frac{3}{%
4}\right\} \right\} \text{.}
\end{eqnarray*}%
Observe that%
\begin{equation*}
C=C_{1}\cup C_{2}\cup C_{3}
\end{equation*}%
is a partition of $C$.

\begin{lemma}
\label{Lemma: C1}The function $\mu $ restricted to $C_{1}$ is a rank
function on $C_{1}$ as in Definition \ref{Definition: rank function}.
\end{lemma}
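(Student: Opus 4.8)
The plan is to check the two conditions of Definition~\ref{Definition: rank function} directly for the function $\mu\upharpoonright C_{1}\colon C_{1}\to\kappa$; Remark~\ref{Remark: particular rank function} is of no use here, as distinct terms of a sequence in $C_{1}$ may have the same $\mu$-value. Fix a sequence $\vec{x}$ in $C_{1}$ all of whose finite sums are defined, so $\fs(\vec{x})\subseteq C_{1}$. Throughout I use two trivialities: a sum of finitely many elements of $\left\{0,\frac{1}{2}\right\}\subseteq\mathbb{R}/\mathbb{Z}$ again lies in $\left\{0,\frac{1}{2}\right\}$; and every $u\in\fs(\vec{x})$, being an element of $C_{1}$, satisfies $\pi_{\mu(u)}(u)=\frac{1}{4}$.

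The single combinatorial move behind both conditions is this: if for distinct indices $m,n,k$ the terms $x_{m},x_{n},x_{k}$ all satisfy $\pi_{\gamma}=\frac{1}{4}$ at some coordinate $\gamma$ and satisfy $\pi_{\beta}\in\left\{0,\frac{1}{2}\right\}$ for every $\beta<\gamma$, then $x_{m}+x_{n}+x_{k}\in\fs(\vec{x})$ has $\pi_{\gamma}$-value $\frac{3}{4}$ while all its earlier coordinates lie in $\left\{0,\frac{1}{2}\right\}$; hence $\gamma$ is the least coordinate on which it leaves $\left\{0,\frac{1}{2}\right\}$, so it belongs to $C_{3}$, contradicting $\fs(\vec{x})\subseteq C_{1}$.

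For condition~(1): if some $\mu$-value $\gamma$ were attained by three distinct elements of the range of $\vec{x}$ (necessarily at three distinct indices), then at each of those terms $\pi_{\gamma}=\frac{1}{4}$ (the term lies in $C_{1}$) and $\pi_{\beta}\in\left\{0,\frac{1}{2}\right\}$ for $\beta<\gamma$ (by definition of $\mu$), and the move above yields a contradiction; so $\mu$ is at most two-to-one on the range of $\vec{x}$. For condition~(2), assume in addition $\mu(x_{n})\geq\mu(x_{0})=:\alpha$ for all $n$, and put $A=\{i\in\omega:\pi_{\alpha}(x_{i})=\frac{1}{4}\}$. From $\mu(x_{i})\geq\alpha$ one sees $\pi_{\alpha}(x_{i})\in\left\{0,\frac{1}{4},\frac{1}{2}\right\}$ for every $i$, and for $i\geq1$ the value $\frac{1}{2}$ is impossible, since then $x_{0}+x_{i}\in\fs(\vec{x})$ would have $\pi_{\alpha}=\frac{3}{4}$ with all earlier coordinates in $\left\{0,\frac{1}{2}\right\}$, hence would lie in $C_{3}$. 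So $\pi_{\alpha}(x_{i})\in\left\{0,\frac{1}{4}\right\}$ for all $i$, with $0\in A$; applying the move with $\gamma=\alpha$ shows $|A|\leq2$. Since $0\in A$, any $b\subseteq\omega\setminus1$ meets $A$ in at most one point, so every $u=x_{0}+\sum_{i\in b}x_{i}$ of $x_{0}+\fs_{1}(\vec{x})$ has $\pi_{\alpha}(u)=\frac{1}{4}+\frac{1}{4}|b\cap A|\in\left\{\frac{1}{4},\frac{1}{2}\right\}$.

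Finally I would partition $x_{0}+\fs_{1}(\vec{x})$ into the at most two pieces on which $\pi_{\alpha}$ is constant; if one piece were an \textup{IP}-set, containing $\fs(\vec{z})$ say, then $\pi_{\alpha}(z_{0}+z_{1})=2\pi_{\alpha}(z_{0})$ would equal $\pi_{\alpha}(z_{0})$, forcing $\pi_{\alpha}(z_{0})=0$ in $\mathbb{R}/\mathbb{Z}$, which is absurd since $\pi_{\alpha}(z_{0})\in\left\{\frac{1}{4},\frac{1}{2}\right\}$. By partition regularity of the family of \textup{IP}-sets, $x_{0}+\fs_{1}(\vec{x})$ is not an \textup{IP}-set, giving condition~(2). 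The only delicate point is the bound $|A|\leq2$, equivalently that $\pi_{\alpha}$ never takes the value $0$ on $x_{0}+\fs_{1}(\vec{x})$: without it the piece where $\pi_{\alpha}=0$ might be an \textup{IP}-set and the closing doubling step would collapse. All the rest is routine coordinatewise arithmetic modulo $\mathbb{Z}$.
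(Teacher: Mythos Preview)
Your proof is correct and follows essentially the same route as the paper's: the same three-term sum contradiction yields the at-most-two-to-one bound, the same $x_{0}+x_{i}$ argument rules out $\pi_{\alpha}(x_{i})=\tfrac{1}{2}$ for $i\geq1$, and your final partition of $x_{0}+\fs_{1}(\vec{x})$ by the value of $\pi_{\alpha}$ is exactly the paper's decomposition (written case-by-case there according to whether $|A|=1$ or $|A|=2$). Your uniform treatment via the set $A$ and the closing doubling argument is a slight streamlining, but the underlying proof is the same.
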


\begin{proof}
Suppose that $\vec{x}$ is a sequence in $\left( 
\mathbb{R}
\left/ 
\mathbb{Z}
\right. \right) ^{\oplus \kappa }$ such that $\mathop{\mathrm{FS}}\nolimits(%
\vec{x})\subset C_{1}$. We will show that the function $\mu$ restricted to $%
\{x_n\big|n\in\omega\}$ is at most two-to-one, in particular finite-to-one.
This is because if $n,m,k\in\omega$ are three distinct numbers such that $%
\mu(x_n)=\mu(x_m)=\mu(x_k)=\alpha$, then for $\beta<\alpha$ we get that $%
\pi_\beta(x_n+x_m+x_k)$ is an element of $\left\{0,\frac{1}{2}\right\}$
because so are $\pi_\beta(x_n),\pi_\beta(x_m),\pi_\beta(x_k)$. On the other
hand, $\pi_\alpha(x_n+x_m+x_k)=\frac{3}{4}$, which shows that $%
\mu(x_n+x_m+x_k)=\alpha$ but $x_n+x_m+x_k\in C_3$, a contradiction.

Now assume also that $\alpha =\mu (x_{0})\leq \mu \left( x_{i}\right) $ for
every $i\in \omega $. By the previous paragraph, there is at most one $n\in
\omega \left\backslash 1\right. $ such that $\mu \left( x_{n}\right) =\mu
(x_{0})=\alpha $. Thus the first case is when there is such $n$. The first
thing to notice is that for each $k\in \omega \setminus \{0,n\}$, we have
that $\pi _{\alpha }(x_{k})=0$. This is because otherwise, since $\mu
(x_{k})>\alpha $ we would have that $\pi _{\alpha }(x_{k})=\frac{1}{2}$ and
so $\pi _{\alpha }(x_{0}+x_{k})=\frac{3}{4}$. Therefore by an argument
similar to that in the previous paragraph, we would also have $\mu
(x_{0}+x_{k})=\alpha $, which would imply that $x_{0}+x_{k}\in C_{3}$, a
contradiction. Now write 
\begin{equation*}
x_{0}+\mathop{\mathrm{FS}}\nolimits_{1}(\vec{x})=\{x_{0}+x_{n}\}\cup \left(
x_{0}+\mathop{\mathrm{FS}}\nolimits((x_{k})_{k\in \omega \setminus
\{0,n\}})\right) \cup \left( x_{0}+x_{n}+\mathop{\mathrm{FS}}%
\nolimits((x_{k})_{k\in \omega \setminus \{0,n\}})\right)
\end{equation*}%
Clearly $\{x_{0}+x_{1}\}$ is not an \textup{IP}-set, as it is finite. Now
since $\pi _{\alpha }(x_{k})=0$ for $i\notin \omega \setminus \{0,n\}$, it
follows that every element $x\in x_{0}+\mathop{\mathrm{FS}}%
\nolimits((x_{k})_{k\in \omega \setminus \{0,n\}})$ must satisfy $\pi
_{\alpha }(x)=\frac{1}{4}$, which implies that $x_{0}+\mathop{\mathrm{FS}}%
\nolimits((x_{k})_{k\in \omega \setminus \{0,n\}})$ cannot contain the sum
of any two of its elements and consequently it is not an \textup{IP}-set.
Similarly, every element $x\in x_{0}+x_{n}+\mathop{\mathrm{FS}}%
\nolimits((x_{k})_{k\in \omega \setminus \{0,n\}})$ satisfies $\pi _{\alpha
}(x)=\frac{1}{2}$, so this set is, by the same argument, not an \textup{IP}%
-set. Hence $x_{0}+\mathop{\mathrm{FS}}\nolimits_{1}(\vec{x})$ is not an 
\textup{IP}-set.

Now if there is no such $n$, i.e.\ if $\mu(x_k)>\mu(x_0)=\alpha$ for all $%
k>0 $, then arguing as in the previous paragraph we get that $%
\pi_\alpha(x_k)=0$ for all $k>0$. Hence every element $x\in x_0+%
\mathop{\mathrm{FS}}\nolimits_1(\vec{x})$ satisfies that $\pi_\alpha(x)=%
\frac{1}{4}$, therefore the set $x_0+\mathop{\mathrm{FS}}\nolimits_1(\vec{x}%
) $ cannot be an \textup{IP}-set. This concludes the proof that $\mu $ is a
rank function on $C_{1}$.
\end{proof}

Considering the fact that the function $t\mapsto -t$ is an automorphism of $%
\left( 
\mathbb{R}
\left/ 
\mathbb{Z}
\right. \right) ^{\oplus \kappa }$ mapping $C_{1}$ onto $C_{3}$ and
preserving $\mu $ allows one to deduce from Lemma \ref{Lemma: C1} that $\mu $
is a rank function on $C_{3}$ as well. Thus it only remains to show that $%
C_{2}$ is \textup{IP}-regular.

Define%
\begin{equation*}
Q_{i,j}=\left\{ x\in C_{2}:\pi _{\mu (x)}(x)\in \bigcup_{m\in \omega }\left[ 
\frac{i}{4}+\frac{1}{2^{3m+j+3}},\frac{i}{4}+\frac{1}{2^{3m+j+2}}\right)
\right\}
\end{equation*}%
for $i\in \left\{ 0,1,2,3\right\} $ and $j\in \left\{ 0,1,2\right\} $.
Observe that%
\begin{equation*}
C_{2}=\bigcup_{i\in 4}\bigcup_{j\in 3}Q_{i,j}
\end{equation*}%
is a partition of $C_{2}$. In order to conclude the proof of Proposition \ref%
{Proposition: abelian} it is now enough to show that for every $i\in 4$ and $%
j\in 3$ the set $Q_{i,j}$ is \textup{IP}-regular. This will follow from
Lemma \ref{Lemma: Q} by Lemma \ref{Lemma: rank function}.

\begin{lemma}
\label{Lemma: Q}Consider $\kappa \times \omega $ well-ordered by the
lexicographic order. The function $\rho :Q_{i,j}\rightarrow \kappa \times
\omega $ defined by $\rho (x)=\left( \mu (x),m\right) $ where $m$ is the
unique element of $\omega $ such that 
\begin{equation*}
\pi _{\mu (x)}(x)\in \left[ \frac{i}{4}+\frac{1}{2^{3m+j+3}},\frac{i}{4}+%
\frac{1}{2^{3m+j+2}}\right)
\end{equation*}%
is a rank function on $Q_{i,j}$.
\end{lemma}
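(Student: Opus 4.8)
The plan is to verify directly the two clauses of Definition~\ref{Definition: rank function} for $\rho$ on the partial semigroup $P=Q_{i,j}$: for every sequence $\vec{x}$ in $Q_{i,j}$ all of whose finite sums again lie in $Q_{i,j}$, one must show that $\rho$ is finite-to-one on $\{x_n:n\in\omega\}$, and that $x_0+\mathop{\mathrm{FS}}\nolimits_{1}(\vec{x})$ is not an \textup{IP}-set whenever $\rho(x_0)$ is $\le$-minimal among the $\rho(x_n)$. The cornerstone, which I would establish first, is that $\mu(x+y)=\min\{\mu(x),\mu(y)\}$ whenever $x,y,x+y\in Q_{i,j}$. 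This is checked coordinate by coordinate up to $\gamma:=\min\{\mu(x),\mu(y)\}$: for $\beta<\gamma$ both $\pi_\beta(x),\pi_\beta(y)\in\{0,\tfrac12\}$, and at $\beta=\gamma$ the fact that the $\mu$-coordinate of a $Q_{i,j}$-element has the form $\tfrac i4+t$ with $t\in(0,\tfrac14)$ keeps $\pi_\gamma(x+y)$ out of $\{0,\tfrac12\}$ (inspect the four residues of $i$ modulo $4$). Iterating, $\mu$ of any nonempty finite subsum of $\vec{x}$ is the least of the $\mu$-values of its summands; and the same computation shows that if $\mu(x)<\mu(y)$ then $\pi_{\mu(x)}(y)=0$ (else $\pi_{\mu(x)}(x+y)=\tfrac{i+2}{4}+t\notin\tfrac i4+(0,\tfrac14)$ modulo $1$), so terms of the sequence whose $\mu$-value exceeds $\alpha:=\min_n\mu(x_n)$ contribute $0$ to the $\alpha$-th coordinate of every subsum.

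For the finite-to-one clause I would split on $i$. If $i\in\{1,2\}$ no two terms can share a $\mu$-value, since the $\mu$-coordinate of their sum would be $\tfrac{2i}{4}+t+t'$, which does not lie in $\tfrac i4+(0,\tfrac14)$ modulo $1$; hence $\rho$ is injective on the range. If $i=0$ no two terms can share a $\rho$-value $(\alpha,m)$: their $\mu$-coordinates would lie in the single interval $[2^{-(3m+j+3)},2^{-(3m+j+2)})$, so the $\mu$-coordinate of their sum would land in $[2^{-(3m+j+2)},2^{-(3m+j+1)})$, which sits inside the gap separating consecutive admissible intervals — contradicting membership in $Q_{0,j}$; again $\rho$ is injective. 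If $i=3$ the analogous pair computation forces $j=0$ and both $m$-indices to be $0$, so the genuinely new situation is a shared value $(\alpha,0)$ in $Q_{3,0}$; there I would bound the number of such terms by tracking the $\mu$-coordinate of growing subsums: writing $\pi_\alpha(x_{n_l})=\tfrac34+\varepsilon_{n_l}$ with $\varepsilon_{n_l}\in[\tfrac18,\tfrac14)$, the admissible window for the $\mu$-coordinate of a $Q_{3,0}$-element with $\mu=\alpha$ is $(\tfrac34,1)$, while the fractional part of $\pi_\alpha(x_{n_1}+\cdots+x_{n_l})=\tfrac{3l}{4}+\sum_{r\le l}\varepsilon_{n_r}$ drifts downward by $\tfrac14-\varepsilon_{n_{l+1}}>0$ at each step, and the interval/gap spacing (the gaps being six times the interval length) then rules out infinitely many such terms.

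For the non-\textup{IP} clause, assume $\rho(x_0)=(\alpha,m_0)$ is minimal. By the cornerstone every $u=x_0+\sum_{n\in a}x_n$ in $x_0+\mathop{\mathrm{FS}}\nolimits_{1}(\vec{x})$ has $\mu(u)=\alpha$ and, with $S=\{n:\mu(x_n)=\alpha\}$ and $\varepsilon_n=\pi_\alpha(x_n)-\tfrac i4$, satisfies $\pi_\alpha(u)=\tfrac i4(1+|a\cap S|)+\varepsilon_0+\sum_{n\in a\cap S}\varepsilon_n$ modulo $1$. If $\mathop{\mathrm{FS}}\nolimits(\vec{y})\subseteq x_0+\mathop{\mathrm{FS}}\nolimits_{1}(\vec{x})$, then the set $T:=\{\pi_\alpha(u):u\in x_0+\mathop{\mathrm{FS}}\nolimits_{1}(\vec{x})\}$ contains $\pi_\alpha(y_0)$, $\pi_\alpha(y_1)$ and, because $\mu(y_0+y_1)=\alpha$, also $\pi_\alpha(y_0)+\pi_\alpha(y_1)$. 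I would derive a contradiction by showing $T$ contains no sum of two of its elements: writing such elements as $\varepsilon_0+w$ and $\varepsilon_0+w'$, a third element of $T$ of the form $\varepsilon_0+w''=2\varepsilon_0+w+w'$ forces $w''=\varepsilon_0+w+w'$, which cannot be reabsorbed into the admissible form modulo the multiples of $\tfrac i4$ — here one uses that $2\varepsilon_0\ge 2^{-(3m_0+j+2)}$ while $\varepsilon_0$ lies strictly below it, together with the interval-separation analysis (in its sharp form for $Q_{3,0}$, where the finite-to-one bound of the previous paragraph makes $S$ meet only finitely many $\rho$-levels). With both clauses verified, $\rho$ is a rank function on $Q_{i,j}$.

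The main obstacle is exactly the block $Q_{3,0}$. For every other $(i,j)$ one checks that $x,y,x+y\in Q_{i,j}$ forces $\rho(x)\ne\rho(y)$ and $\rho(x+y)=\min\{\rho(x),\rho(y)\}$, so one could alternatively quote Remark~\ref{Remark: particular rank function}; but in $Q_{3,0}$ this identity fails — one can have $\mu(x)=\mu(y)$ with $\varepsilon_x+\varepsilon_y>\tfrac14$ and $\rho(x+y)=(\mu(x),m)$ for an $m$ strictly greater than both $m$-indices of $x$ and $y$ — so the delicacy of the proof is concentrated in the monotone-drift bookkeeping for the $\mu$-coordinate.
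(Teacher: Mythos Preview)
Your overall strategy --- verify the two rank-function clauses directly rather than appeal to Remark~\ref{Remark: particular rank function} --- differs from the paper's. The paper treats only the case $i=j=0$ explicitly and checks there that $x,y,x+y\in Q_{0,0}$ forces $\rho(x)\neq\rho(y)$ and $\rho(x+y)=\min\{\rho(x),\rho(y)\}$; the remaining eleven cases are declared ``analogous''. You correctly observe that this is not so for $(i,j)=(3,0)$: there one can have $x,y,x+y\in Q_{3,0}$ with $\rho(x)=\rho(y)$, so Remark~\ref{Remark: particular rank function} does not apply to that block.

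However, your patch for $Q_{3,0}$ does not work either, and in fact the lemma as stated is \emph{false} for that block. The ``downward drift'' $\tfrac14-\varepsilon_{n_{l+1}}$ can be summable, so nothing bounds the number of terms sharing the value $(\alpha,0)$. Concretely, take $\kappa=1$ and set $x_n=1-2^{-(n+10)}\in[\tfrac78,1)$ for every $n\in\omega$; then $\rho(x_n)=(0,0)$ for all $n$, while for every nonempty finite $F\subset\omega$
\[
\sum_{n\in F}x_n\ \equiv\ 1-\sum_{n\in F}2^{-(n+10)}\ \in\ \bigl(1-2^{-9},\,1\bigr)\ \subset\ \bigl[\tfrac78,1\bigr)\ \subset\ Q_{3,0}\pmod 1.
\]
Thus $\mathop{\mathrm{FS}}\nolimits(\vec x)\subset Q_{3,0}$ but $\rho$ is constant on $\{x_n:n\in\omega\}$, so clause~(1) of Definition~\ref{Definition: rank function} fails and $\rho$ is not a rank function on $Q_{3,0}$. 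Neither your drift argument nor the paper's ``analogous'' remark can succeed here. (The surrounding goal --- that each $Q_{i,j}$ is strongly \textup{IP}-regular, hence that Proposition~\ref{Proposition: abelian} holds --- is still recoverable, for instance by refining the partition of $C_2$ on the block $i=3$; but not via the function $\rho$ as defined.)
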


\begin{proof}
To simplify the notation let us run the proof in the case when $i=j=0$. The
proof in the other cases is analogous. By Remark \ref{Remark: particular
rank function} it is enough to show that if $x$ and $y$ are such that $%
x,y,x+y\in Q_{0,0}$ then $\rho (x)\neq \rho (y)$ and $\rho \left( x+y\right)
=\min \left\{ \rho (x),\rho (y)\right\} $, so suppose that $x$,$y$ are
elements of $Q_{0,0}$ such that $x+y\in Q_{0,0}$ and assume by contradiction
that $\rho (x)=\rho (y)=\left(\alpha,m\right) $. Thus%
\begin{equation*}
\pi _{\alpha}(x),\pi _{\alpha}(y)\in \left[ \frac{1}{2^{3m+3}},\frac{1}{%
2^{3m+2}}\right)
\end{equation*}%
and hence%
\begin{equation*}
\pi _{\alpha}\left( x+y\right) \in \left[ \frac{1}{2^{3m+2}},\frac{1}{%
2^{3m+1}}\right) \text{.}
\end{equation*}%
If $m=0$ then%
\begin{equation*}
\pi _{\alpha}\left( x+y\right) \in \left[ \frac{1}{4},\frac{1}{2}\right)
\end{equation*}%
thus 
\begin{equation*}
x+y\in C_1\cup Q_{1,0}\cup Q_{1,1}\cup Q_{1,3}\text{.}
\end{equation*}%
If $m>0$ then%
\begin{equation*}
\pi _{\alpha}\left( x+y\right) \in \left[ \frac{1}{2^{3\left( m-1\right)
+2+3}},\frac{1}{2^{3\left( m-1\right) +2+2}}\right)
\end{equation*}%
and therefore 
\begin{equation*}
x+y\in Q_{0,2}\text{.}
\end{equation*}%
In either case one obtains a contradiction from the assumption that $x+y\in
Q_{0,0}$. This concludes the proof that $\rho (x)\neq \rho (y)$. We now
claim that $\rho \left( x+y\right) =\min \left\{ \rho (x),\rho (y)\right\} $%
. Define $\rho (x)=\left(\alpha,m\right) $ and $\rho
(y)=\left(\beta,n\right) $. Let us first consider the case when $%
\alpha=\beta $ and without loss of generality $m>n $. In this case%
\begin{equation*}
\pi _{\xi}\left( x+y\right) \in \left\{0,\frac{1}{2}\right\}
\end{equation*}%
for $\xi<\alpha$, while%
\begin{equation*}
\pi _{\alpha}\left( x+y\right) \in \left[ \frac{1}{2^{3m+3}}+\frac{1}{%
2^{3n+3}},\frac{1}{2^{3m+2}}+\frac{1}{2^{3n+2}}\right)
\end{equation*}%
where%
\begin{equation*}
\frac{1}{2^{3m+2}}+\frac{1}{2^{3n+2}}<\frac{1}{2^{3n+1}}<\frac{1}{2^{3\left(
n-1\right) +3}}\text{.}
\end{equation*}%
This shows that $\rho \left( x+y\right) =\left(\alpha,n\right) =\min \left\{
\rho (x),\rho (y)\right\} $. Let us now consider the case when $%
\alpha\neq\beta$ and without loss of generality $\alpha>\beta$. In this case 
\begin{equation*}
\pi _{\xi}\left( x+y\right) \in \left\{ 0,\frac{1}{2}\right\}
\end{equation*}%
for $\xi<\beta$ while%
\begin{equation*}
\pi _{\beta}(x)=0
\end{equation*}%
(because if not then $\pi_\beta(x)=\frac{1}{2}$ and that would imply that $%
x+y\in\bigcup_{j\in3}Q_{1,j}$), and hence%
\begin{equation*}
\pi _{\beta}\left( x+y\right) =\pi _{\beta}(y)\text{.}
\end{equation*}%
This shows that $\rho \left( x+y\right) =\left(\beta,n\right) =\min \left\{
\rho (x),\rho (y)\right\} $. This concludes the proof of the fact that $\rho 
$ satisfies the hypothesis of Remark \ref{Remark: particular rank function}
and, hence, it is a rank function on $Q_{0,0}$.
\end{proof}

\section{The class of \textup{IP}-regular semigroups\label{Section:
IP-regular}}

In this section all semigroups will be denoted multiplicatively. Let us
define $\mathcal{R}$ to be the class of all \textup{IP}-regular semigroups.
Observe that by Proposition \ref{Proposition: abelian} $\mathcal{R}$
contains all commutative cancellative semigroups. We will now show that $%
\mathcal{R}$ contains all Archimedean commutative semigroups. Recall that a
commutative semigroup $S$ is \emph{Archimedean }if for every $a,b\in S$
there is a natural number $n$ and an element $t$ of $S$ such that $a^{n}=bt$%
, see \cite[Section III.1]{Grillet-abelian}. By \cite[Proposition III.1.3]%
{Grillet-abelian} an Archimedean commutative semigroup contains at most one
idempotent.

\begin{proposition}
\label{Proposition: archimedean}Archimedean commutative semigroups are 
\textup{IP}-regular.
\end{proposition}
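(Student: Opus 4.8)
The plan is to split into two cases according to whether $S$ contains an idempotent, using that by \cite[Proposition III.1.3]{Grillet-abelian} it contains at most one. In each case I would produce a semigroup homomorphism from $S$ onto a cancellative commutative semigroup $T$, which is \textup{IP}-regular by Proposition \ref{Proposition: abelian}, such that the preimage of $E(T)$ is \textup{IP}-regular; Lemma \ref{Lemma: inverse image} then gives that $S$ is \textup{IP}-regular.

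Suppose first that $S$ has no idempotent. I would take $T:=S/\rho$, where $\rho$ is the congruence defined by $x\mathrel{\rho}y$ iff $xz=yz$ for some $z\in S$, so that $T$ is cancellative commutative, and Archimedean as a homomorphic image of $S$. The point to check is that $T$ has no idempotent either: if $\bar{x}^{2}=\bar{x}$ in $T$, then $x^{2}z=xz$ for some $z\in S$; setting $w=xz$ gives $xw=w$, hence $x^{n}w=w$ for all $n\geq1$, and the Archimedean property applied to $(x,w)$ yields $k\geq1$ and $t\in S$ with $x^{k}=wt$, so $w=x^{k}w=w^{2}t$ and therefore $wt$ is idempotent --- contradicting $E(S)=\emptyset$. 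Hence $E(T)=\emptyset$, the preimage of $E(T)$ under $S\to T$ is empty (so trivially \textup{IP}-regular), and we are done by Lemma \ref{Lemma: inverse image}.

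Now suppose $S$ has a (necessarily unique) idempotent $e$. I would first check, using the Archimedean property, that $Se=eS$ is a commutative group with identity $e$: it is a subsemigroup on which $e$ is an identity, and given $se\in Se$, applying the Archimedean property to $(e,se)$ yields $n$ and $t$ with $e=e^{n}=set$, so $et\in Se$ inverts $se$. The map $f\colon S\to Se$, $f(x)=xe$, is a homomorphism with $f^{-1}[E(Se)]=\{x\in S:xe=e\}=:N$, and one checks that $N$ is a subsemigroup having $e$ as a zero, with $E(N)=\{e\}$, and that $N$ is \emph{nil}: for $x\in N$, the Archimedean property applied to $(x,e)$ gives $x^{n}=et$ for some $n\geq1$, and multiplying by $e$ forces $x^{n}=e$. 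Finally I would show that any commutative nil semigroup $N$ with zero $0$ is \textup{IP}-regular, i.e.\ that $N\setminus\{0\}=N\setminus E(N)$ is strongly \textup{IP}-regular: given a sequence $\vec{x}$ in $N\setminus\{0\}$ all of whose finite products are nonzero, fix $m\geq1$ with $x_{0}^{m}=0$; were $x_{0}\,\fp_{1}(\vec{x})$ an \textup{IP}-set, say containing $\fp(\vec{z})$, then $z_{0}z_{1}\cdots z_{m-1}\in\fp(\vec{z})$, but each $z_{i}$ has the form $x_{0}w_{i}$, so $z_{0}z_{1}\cdots z_{m-1}=x_{0}^{m}w_{0}\cdots w_{m-1}=0\notin x_{0}\,\fp_{1}(\vec{x})\subseteq\fp(\vec{x})\subseteq N\setminus\{0\}$, a contradiction. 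Applying this to $N$ and Proposition \ref{Proposition: abelian} to $Se$, Lemma \ref{Lemma: inverse image} finishes the case.

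The hard part will be the two structural facts squeezed out of the Archimedean hypothesis --- that the maximal cancellative quotient of an idempotent-free $S$ stays idempotent-free, and that the set $\{x:xe=e\}$ is nil in the idempotent case --- together with getting the nil-semigroup step exactly right against the partial-semigroup formulation of Definition \ref{Definition: strongly regular}. The rest is routine manipulation with Lemma \ref{Lemma: inverse image}.
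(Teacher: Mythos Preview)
Your proof is correct and follows the same two-case split as the paper, reducing via Lemma~\ref{Lemma: inverse image} to Proposition~\ref{Proposition: abelian} together with a short argument that commutative nilsemigroups are \textup{IP}-regular. In the idempotent-free case you essentially reprove by hand what the paper cites as \cite[Proposition~IV.4.1]{Grillet}. In the case with idempotent $e$, however, you use the decomposition \emph{dual} to the paper's: the paper passes to the Rees quotient $S\to S/H_e$ onto a nilsemigroup, with the abelian group $H_e$ as the preimage of the zero; you instead retract $S\to Se=H_e$, $x\mapsto xe$, onto the group, with a nil subsemigroup $N=\{x:xe=e\}$ as the preimage of the identity. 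Both routes feed the same two ingredients into Lemma~\ref{Lemma: inverse image}; yours is more self-contained (you verify directly that $Se$ is a group and $N$ is nil, rather than citing \cite[Proposition~IV.2.3]{Grillet}), while the paper's leans on the standard structure theory. Note incidentally that your $N$ need not coincide with $(S\setminus H_e)\cup\{e\}$ in general, but this does not matter: your argument only uses that $N$ is a commutative semigroup with zero $e$ in which every element is nilpotent, and that is exactly what you prove.
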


\begin{proof}
Suppose that $S$ is a commutative Archimedean semigroup. Let us first assume
that $S$ has no idempotent elements: In this case by \cite[Proposition IV.4.1%
]{Grillet} there is a congruence $\mathcal{C}$ on $S$ such that the quotient 
$S\left/ \mathcal{C}\right. $ is a commutative cancellative semigroup with
no idempotent elements. It follows from Proposition \ref{Proposition:
abelian} that $S\left/ \mathcal{C}\right. $ is \textup{IP}-regular, and
therefore $S$ is \textup{IP}-regular by Lemma \ref{Lemma: inverse image}.
Let us consider now the case when $S$ has a (necessarily unique) idempotent
element $e$. Denote by $H_{e}$ the maximal subgroup of $S$ containing $e$.
By \cite[Proposition IV.2.3]{Grillet} $H_{e}$ is an ideal of $S$ and the
quotient $S\left/ H_{e}\right. $ is a commutative nilsemigroup, i.e.\ a
commutative semigroup with a zero element such that every element is
nilpotent. By Lemma \ref{Lemma: inverse image} and Proposition \ref%
{Proposition: abelian} it is therefore enough to show that a commutative
nilsemigroup $T$ is \textup{IP}-regular. Denote by $0$ the zero element of $%
T $. We claim that $T\left\backslash \left\{ 0\right\} \right. $ is strongly 
\textup{IP}-regular (as in Definition \ref{Definition: strongly regular}).
In fact if $\vec{x}$ is a sequence in $T$ such that $\mathop{\mathrm{FP}}%
\nolimits(\vec{x})$ does not contain $0$, then $x_{0}\mathop{\mathrm{FP}}%
\nolimits(\vec{x})$ is not an \textup{IP}-set since $x_{0}$ is nilpotent.
This concludes the proof that $T$ is \textup{IP}-regular, and hence also the
proof of Proposition \ref{Proposition: archimedean}.{}
\end{proof}

Let us now comment on the closure properties of the class $\mathcal{R}$. By
Remark \ref{Remark: subsemigroup}, $\mathcal{R}$ is closed with respect to
taking subsemigroups, and contains all finite semigroups. Moreover by
Proposition \ref{Proposition: subnormal series IP} if a group $G$ has a
subnormal series with factor groups in $\mathcal{R}$, then $G$ belongs to $%
\mathcal{R}$. In particular $\mathcal{R}$ contains all virtually solvable
groups and their subgroups. Proposition \ref{Proposition: free product}
shows that free products of elements of $\mathcal{R}$ with no idempotent
elements are still in $\mathcal{R}$.

\begin{proposition}
\label{Proposition: free product}Suppose that $S,T$ are semigroups. If both $%
S$ and $T$ are \textup{IP}-regular, and $T$ has no idempotent elements, then
the free product $S\ast T$ is \textup{IP}-regular.
\end{proposition}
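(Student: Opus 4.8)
The plan is to realize $S\ast T$ as the inverse image, under a length-preserving homomorphism, of a very small partial semigroup that can be dismantled by hand, and then to invoke Lemma \ref{Lemma: inverse image}. Recall that $S\ast T$ consists of the reduced alternating words $u_{1}u_{2}\cdots u_{n}$, $n\geq 1$, in which each $u_{i}$ lies in $S$ or in $T$ and consecutive letters lie in different factors; the product of two words is their concatenation, with the last letter of the first word and the first letter of the second multiplied together inside the common factor whenever they happen to lie in the same one, so that every element of $S\ast T$ has a well-defined length, namely its number of syllables. First I would introduce the free product $D=\{a\}\ast\{b\}$ of the two one-element (idempotent) semigroups $\{a\}$ and $\{b\}$; concretely, $D$ is the set of all alternating words in the letters $a,b$ subject to the relations $aa=a$ and $bb=b$. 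Let $\pi\colon S\ast T\to D$ be the map replacing every syllable coming from $S$ by the letter $a$ and every syllable coming from $T$ by the letter $b$. A routine check shows that $\pi$ is a well-defined homomorphism --- the merging of two syllables from the same factor being respected precisely because $aa=a$ and $bb=b$ --- and, crucially, that $\pi$ does not change the length of a word. Consequently $\pi^{-1}[\{a,b\}]$ is exactly the set of length-one words, namely $S\cup T$.

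The heart of the matter is to prove that $D$ itself is \textup{IP}-regular. Since a word of length $\geq 2$ in $D$ has a strictly longer square, $E(D)=\{a,b\}$ and $D\setminus E(D)$ is the set of words of length $\geq 2$. This set is the union of the four subsets $D_{xy}$ ($x,y\in\{a,b\}$) consisting of the words of length $\geq 2$ that begin with $x$ and end with $y$, and a direct computation identifies each $D_{xy}$, with its induced partial semigroup structure, with $(\mathbb{N},+)$: for instance $D_{ab}=\{(ab)^{k}:k\geq 1\}$ with $(ab)^{k}(ab)^{l}=(ab)^{k+l}$, while $D_{aa}=\{(ab)^{k}a:k\geq 1\}$ with $(ab)^{k}a\cdot(ab)^{l}a=(ab)^{k+l}a$ (the adjacent copies of $a$ at the junction merging), and the cases of $D_{ba}$ and $D_{bb}$ are symmetric. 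Hence each $D_{xy}$ is \textup{IP}-regular by Proposition \ref{Proposition: abelian}, so $D$, being the union of the finitely many \textup{IP}-regular subsets $D_{aa},D_{ab},D_{ba},D_{bb}$ together with the finite --- hence, by Remark \ref{Remark: subsemigroup}, strongly \textup{IP}-regular --- set $\{a,b\}$, is \textup{IP}-regular.

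It then remains only to verify the second hypothesis of Lemma \ref{Lemma: inverse image}: that $\pi^{-1}[E(D)]=S\cup T$, equipped with the partial semigroup structure induced from $S\ast T$, is \textup{IP}-regular. Here one uses that the product of a length-one word from $S$ with a length-one word from $T$ is a word of length $2$, hence undefined in $S\cup T$; thus the induced structure on $S\cup T$ is just the disjoint union of $S$ and $T$ with their original operations. As $S$ and $T$ are \textup{IP}-regular by hypothesis, $S\cup T$ is a union of two \textup{IP}-regular subsets, hence \textup{IP}-regular. Lemma \ref{Lemma: inverse image} applied to $\pi$ now gives that $S\ast T$ is \textup{IP}-regular.

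I expect the only genuine difficulty to be hitting on the auxiliary pair $(D,\pi)$. A more naive attempt would dismantle $S\ast T$ itself, splitting its words of length $\geq 2$ according to the factors of their first and last syllables; but, for example, the piece consisting of the words whose first syllable lies in $S$ and whose last lies in $T$ is a free semigroup on the alphabet $S\times T$, and showing that free semigroups are \textup{IP}-regular is itself not routine and is not among the results established so far. Passing to $D$ sidesteps this entirely: collapsing every syllable to $a$ or $b$ makes the corresponding pieces of $D$ singly generated, so that they degenerate to copies of $(\mathbb{N},+)$, which Proposition \ref{Proposition: abelian} already handles. With $\pi$ and $D$ in place, everything else is bookkeeping with the closure properties recorded earlier in this section and in Section \ref{Section: abelian}.
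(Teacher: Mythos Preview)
Your argument is correct, but it is not the paper's. The paper's proof is a one-liner: it adjoins an identity to $T$, obtaining $T_{1}$, and takes the homomorphism $f\colon S\ast T\to T_{1}$ that erases all $S$-syllables from a word and multiplies the remaining $T$-syllables (sending a word with no $T$-syllables to $1$). Because $T$ has no idempotents, $E(T_{1})=\{1\}$ and $f^{-1}[\{1\}]\cong S$, so Lemma~\ref{Lemma: inverse image} applies immediately. Your route factors instead through $D=\{a\}\ast\{b\}$ and requires the extra step of proving $D$ is \textup{IP}-regular by hand, but it has a real payoff: nowhere do you use that $T$ has no idempotents, so your argument actually establishes the stronger statement that the free product of any two \textup{IP}-regular semigroups is \textup{IP}-regular. (A small cosmetic point: you need not invoke Remark~\ref{Remark: subsemigroup} for $\{a,b\}$ at all, since $D\setminus E(D)$ is already the union of your four $D_{xy}$'s, each a copy of $(\mathbb{N},+)$ and hence a finite union of strongly \textup{IP}-regular sets; this gives \textup{IP}-regularity of $D$ directly from Definition~\ref{Definition: regular}.)
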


\begin{proof}
Denote by $T_{1}$ the semigroup obtained from $T$ adding an identity element 
$1$. Consider the semigroup homomorphism from $S\ast T$ to $T_{1}$ sending a
word $w$ to $1$ if $w$ does not contain any letters from $T$, and otherwise
sending $w$ to the element of $T$ obtained from $w$ by erasing the letters
from $S$ and then taking the product in $T$ of the remaining letters of $w$.
Observe that $f^{-1}\left[ \left\{ 1\right\} \right] $ is isomorphic to $S$
and therefore \textup{IP}-regular. The conclusion now follows from Lemma \ref%
{Lemma: inverse image}.
\end{proof}

The particular case of Proposition \ref{Proposition: free product} when $S=T=%
\mathbb{N}$ lets us obtain that the free semigroup on $2$ generators is 
\textup{IP}-regular. Considering the function assigning to a word its
length, which is a semigroup homomorphism onto $\mathbb{N}$, one can see
that a free semigroup in any number of generators is \textup{IP}-regular,
since so is $\mathbb{N}$ . Via Lemma \ref{Lemma: IP-regular superstrongly}
this observation gives a short proof of \cite[Lemma 2.3]{Hindman-Jones}.

\section{The main theorem\label{Section: main theorem}}

We will now present the proof of Theorem \ref{Theorem: superstrongly}.
Suppose that $S$ is a commutative semigroup with well-founded universal
semilattice. Denote by $\mathcal{N}$ the smallest semilattice congruence on $%
S$ as in \cite[Proposition III.2.1]{Grillet}. Recall that the universal
semilattice of $S$ is the quotient $S\left/ \mathcal{N}\right. $ by \cite[%
Proposition III.2.2]{Grillet}. Moreover by \cite[Theorem III.1.2]%
{Grillet-abelian} every $\mathcal{N}$-equivalence class is an Archimedean
subsemigroup of $S$ known as an \emph{Archimedean component} of $S$. Pick a
nonprincipal strongly summable ultrafilter $p$ on $S$. If $p$ contains some
Archimedean component of $S$, then $p$ is regular by Lemma \ref{Lemma:
IP-regular superstrongly} and Proposition \ref{Proposition: archimedean}.
Let us then assume that $p$ does not contain any Archimedean component.
Denote by $f:S\rightarrow S\left/ \mathcal{N}\right. $ the canonical
quotient map, and by $q$ the ultrafilter on $S\left/ \mathcal{N}\right. $
defined by $B\in q$ if and only if $f^{-1}\left[ B\right]\in p $ . By Remark %
\ref{Remark: inverse image}, $q$ is a nonprincipal strongly productive
ultrafilter on $S\left/ \mathcal{N}\right. $, and moreover in order to
conclude that $p$ is regular it is enough to show that $q$ is regular. This
will follow from Lemma \ref{Lemma: semilattice}.

\begin{lemma}
\label{Lemma: semilattice}If $\Lambda $ is a well-founded semilattice, then
any nonprincipal strongly summable ultrafilter $q$ on $\Lambda $ is regular.
\end{lemma}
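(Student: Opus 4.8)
The plan is to argue directly from the order structure of $\Lambda$, without invoking the \textup{IP}-regularity machinery of the previous sections (which does not apply here, since a nontrivial semilattice is never cancellative). I would order $\Lambda$ by $a\le b\iff a+b=a$, so that the semilattice operation is the meet and $a+b\le a$, $a+b\le b$ for all $a,b\in\Lambda$; well-foundedness of $\Lambda$ then means that every nonempty subset has a minimal element (which is all that is needed, even if ``least element'' is read literally). The key elementary observation is that for \emph{every} sequence $\vec x$ in $\Lambda$,
\[
x_{0}+\fs_{1}(\vec x)\ \subseteq\ ({\downarrow}x_{0})\cap\fs(\vec x),\qquad\text{where}\quad{\downarrow}x_{0}=\{y\in\Lambda:y\le x_{0}\},
\]
because each element of $x_{0}+\fs_{1}(\vec x)$ has the form $x_{0}+\sum_{i\in a}x_{i}\le x_{0}$ and at the same time lies in $\fs(\vec x)$.

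Granting this, the proof reduces to producing a member $B$ of $q$ such that ${\downarrow}a\cap B\notin q$ for every $a\in B$. Indeed, if $\vec x$ is any sequence with $\fs(\vec x)\subseteq B$, then $x_{0}\in B$ and, by the displayed inclusion, $x_{0}+\fs_{1}(\vec x)\subseteq({\downarrow}x_{0})\cap B\notin q$; hence $B$ witnesses that $q$ is regular. (This reduction uses only that $q$ is a nonprincipal ultrafilter, not strong summability.) To build such a $B$, I would consider $V=\{a\in\Lambda:{\downarrow}a\in q\}$, which is upward closed since ${\downarrow}a\subseteq{\downarrow}a'$ whenever $a\le a'$. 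If $V=\emptyset$, take $B=\Lambda$: then ${\downarrow}a\cap B={\downarrow}a\notin q$ for all $a$. If $V\neq\emptyset$, use well-foundedness to fix a minimal element $a_{0}$ of $V$ and set $B={\downarrow}a_{0}\setminus\{a_{0}\}$; since ${\downarrow}a_{0}\in q$ and $q$ is nonprincipal (so $\Lambda\setminus\{a_{0}\}\in q$), we have $B\in q$, and each $a\in B$ satisfies $a<a_{0}$, hence $a\notin V$ by minimality, hence ${\downarrow}a\cap B\subseteq{\downarrow}a\notin q$.

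The argument is essentially this short, and I do not anticipate a genuine obstacle; the one point demanding care is the order convention, on which the whole argument hinges: the semilattice operation must be the \emph{meet}, so that $x_{0}+(\text{finite sum})$ stays below $x_{0}$ and the relevant set $V$ is \emph{upward} closed (were the operation the join, $V$ would be downward closed and well-foundedness would supply no maximal element to exploit). Since the universal semilattice of a commutative semigroup and the semilattice of idempotents of an inverse semigroup carry precisely this meet order, this is the right convention for the intended applications, and well-foundedness is then used exactly once, to extract the minimal element $a_{0}$.
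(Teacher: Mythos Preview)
Your argument is correct and rests on the same single observation the paper uses, namely that in a meet-semilattice every element of $x_{0}+\fs_{1}(\vec x)$ lies in ${\downarrow}x_{0}$, so that well-foundedness of the order governs the possible failures of regularity. The paper organizes this as a well-founded induction (after adjoining a top element $x_{\max}$) on the predicate ``if $\pred(x)\in q$ then $q$ is regular'': either $\pred(x)$ already witnesses regularity, or some $y_{0}<x$ has $\pred(y_{0})\in q$ and the inductive hypothesis finishes. Your version is the natural unwinding of that induction: you name the set $V=\{a:{\downarrow}a\in q\}$ outright, take a minimal element $a_{0}$, and read off the witness $B={\downarrow}a_{0}\setminus\{a_{0}\}$. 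The two proofs are logically equivalent; yours is a bit more direct, dispenses with the adjoined top, and makes explicit that strong summability is not actually used---only nonprincipality and the order structure.
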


\begin{proof}
We can assume without loss of generality that $\Lambda $ has a maximum
element $x_{\max }$. For $x\in \Lambda $, denote by $\mathop{\mathrm{pred}}%
\nolimits(x)$ the set 
\begin{equation*}
\left\{ y\in \Lambda :y\leq x\text{ and }y\neq x\right\}
\end{equation*}%
of \textit{strict} predecessors of $x$. We will show by well-founded
induction that, for every $x\in \Lambda $, if $\mathop{\mathrm{pred}}%
\nolimits(x)\in q$ then $q$ is regular. The conclusion will follow from the
observation that $\mathop{\mathrm{pred}}%
\nolimits\left( x_{\max }\right) \in q$. Suppose that $x$ is an
element of $\Lambda $ such that, for every $y\in \mathop{\mathrm{pred}}%
\nolimits(x)$, if $\mathop{\mathrm{pred}}\nolimits(y)\in q$ then $q$ is
regular. Suppose that $\mathop{\mathrm{pred}}\nolimits(x)\in q$. If $%
\mathop{\mathrm{pred}}\nolimits(x)$ witnesses the fact that $q$ is regular,
then this concludes the proof. Otherwise there is a sequence $\vec{y}$ in $%
\Lambda $ such that $\mathop{\mathrm{FP}}\nolimits(\vec{y})\subset %
\mathop{\mathrm{pred}}\nolimits(x)$ and $y_{0}\mathop{\mathrm{FP}}%
\nolimits_{1}(\vec{y})\in p$. Observing that $y_{0}\mathop{\mathrm{FP}}%
\nolimits_{1}(\vec{y})\subset \mathop{\mathrm{pred}}\nolimits(y_{0})\cup
\left\{ y_{0}\right\} $ allows one to conclude that $\mathop{\mathrm{pred}}%
\nolimits(y_{0})\in q$, where $y_{0}\in \mathop{\mathrm{pred}}\nolimits(x)$.
Thus by inductive hypothesis $q$ is regular.
\end{proof}

This concludes the proof of the fact that a nonprincipal strongly summable
ultrafilter on a commutative semigroup with well-founded universal
semilattice is regular. We will now show that the same fact holds for
solvable inverse semigroups with well-founded semilattice of idempotents. An
introduction to inverse semigroups can be found in \cite[Chapter VII]%
{Grillet} or in the monograph \cite{Lawson}. Recall that the semilattice of
idempotents of a commutative inverse semigroup is isomorphic to its
universal semilattice. The notion of solvable inverse semigroup has been
introduced by Piochi in \cite{Piochi1} as a generalization of the notion of
solvable group to the context of inverse semigroups (solvable groups are
thus exactly the solvable inverse semigroups with only one idempotent, see 
\cite[Theorem 3.4]{Piochi1}). Observe that by definition a solvable inverse
semigroup $S$ of class $n+1$ has a commutative congruence $\gamma _{S}$ such
that, if $f:S\rightarrow S\left/ \gamma _{S}\right. $ is the canonical
quotient map, then%
\begin{equation*}
f^{-1}\left[ E\left( S\left/ \gamma _{S}\right. \right) \right]
\end{equation*}%
is an inverse subsemigroup of $S$ of solvability class $n$. Moreover the
solvable inverse semigroups of solvability class $1$ are exactly the
commutative semigroups. The fact that solvable inverse semigroups with
well-founded semilattice of idempotents satisfy the conclusion of Theorem %
\ref{Theorem: superstrongly} will then follow from Remark \ref{Remark:
inverse image} by induction on the solvability class, after we observe that
a homorphic image of a semigroup with well-founded semilattice of
idempotents also has a well-founded semilattice of idempotents. This is the
content of Lemma \ref{Lemma: inverse well founded}.

\begin{lemma}
\label{Lemma: inverse well founded}Suppose that $S$, $T$ are semigroups, and 
$f:S\rightarrow T$ is a surjective semigroup homomorphism. If $S$ is an
inverse semigroup with well-founded semilattice of idempotents, then so is $T$.
\end{lemma}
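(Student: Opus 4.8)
The plan is to show that the homomorphic image of an inverse semigroup is an inverse semigroup, that idempotents push forward to idempotents (and in fact the image of $E(S)$ is exactly $E(T)$), and that a surjection of semilattices is order-preserving in a way that carries well-foundedness across. First I would recall that a semigroup is inverse if and only if it is regular and its idempotents commute; since $f$ is a surjective homomorphism, $T$ inherits regularity (for $b\in T$ pick $a\in S$ with $f(a)=b$ and an inverse $a^{-1}$ of $a$ in $S$, and check $f(a^{-1})$ is an inverse of $b$), and the idempotents of $T$ commute because every idempotent of $T$ is the image of an idempotent of $S$. The latter uses the standard fact that in a regular semigroup, if $f(a)$ is idempotent then $f(a^{-1}a)=f(a)^{-1}f(a)=f(a)f(a)=f(a)$ and $a^{-1}a\in E(S)$; hence $f[E(S)]=E(T)$, so $E(T)$ is a semilattice and $f\restriction E(S)\colon E(S)\to E(T)$ is a surjective semilattice homomorphism.

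Next I would reduce the whole statement to the following purely order-theoretic claim: \emph{a surjective homomorphism of semilattices maps a well-founded semilattice onto a well-founded semilattice}. Here the order on a (meet-)semilattice is $e\le g\iff eg=e$, and a semilattice homomorphism is automatically order-preserving. Given a nonempty $X\subseteq E(T)$, I would lift it to $Y=(f\restriction E(S))^{-1}[X]\subseteq E(S)$, which is nonempty; by well-foundedness $Y$ has a minimal element $e_0$, and I claim $f(e_0)$ is minimal in $X$. The point is that if $g\in X$ with $g\le f(e_0)$, then $g=f(e_0)g$; lifting $g$ to some $g'\in E(S)$ with $f(g')=g$ and replacing $g'$ by $e_0 g'\in E(S)$ (which still lies in $Y$ since $f(e_0g')=f(e_0)g=g\in X$ and is $\le e_0$), minimality of $e_0$ forces $e_0g'=e_0$, hence $g=f(e_0g')=f(e_0)$. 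So $f(e_0)$ is minimal in $X$, and since the order on a semilattice is a partial order, "well-founded" as stated (every nonempty subset has a least element) is here to be read as "every nonempty subset has a minimal element" — or, if the authors really mean least, one runs the same argument and then notes the minimal element of $X$ is least using that $E(T)$ is directed downward as a meet-semilattice together with minimality.

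The main obstacle, and the step deserving the most care, is the passage $f[E(S)]=E(T)$: it is true but not completely trivial, and it is exactly what licenses transferring well-foundedness of the idempotent semilattice rather than merely of all of $S$. Everything else — regularity of $T$, commutation of idempotents, order-preservation, and the lifting argument for minimal elements — is routine once that is in hand. I would present the argument in the order: (1) $T$ is inverse; (2) $f[E(S)]=E(T)$; (3) $f\restriction E(S)$ is an order-preserving surjection of semilattices; (4) lift-a-minimal-element argument to conclude $E(T)$ is well-founded.
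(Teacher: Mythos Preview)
Your argument is correct and matches the paper's proof: both lift a nonempty $B\subseteq E(T)$ to its preimage in $E(S)$, take a minimal element $e_0$ there, and show $f(e_0)$ is minimal in $B$ by producing, for any $g\le f(e_0)$ in $B$, a preimage of $g$ below $e_0$---your product $e_0g'$ is precisely the construction behind the paper's citation of \cite[Chapter 1, Proposition 21(7)]{Lawson}. The only difference is that the paper outsources ``$T$ is inverse'' and ``$f[E(S)]=E(T)$'' to \cite{Clifford-Preston-II} and \cite{Lawson} while you sketch them directly; note that your computation $f(a)^{-1}f(a)=f(a)f(a)$ tacitly uses uniqueness of inverses in $T$ before it is established, but this is the standard Lallement's Lemma and is easily repaired.
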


\begin{proof}
By \cite[Theorem 7.32]{Clifford-Preston-II} $T$ is an inverse semigroup. If $%
B$ is a nonempty subset of the idempotent semilattice $E\left( T\right) $ of 
$T$, let $A$ be the set of idempotent elements $a$ of $S$ such that $f(a)\in
B$. Since by hypothesis the idempotent semilattice $E(S)$ of $S$ is well-founded, $A$ has a minimal element $a_{0}$. We claim that $b_{0}=f(a_{0})$
is a minimal element of $B$. Suppose that $b\in B$ is such that $b\leq b_{0}$%
. By \cite[Chapter 1, Proposition 21(3)]{Lawson} there exists $a\in A$ such
that $f(a)=b\leq b_{0}=f(a_{0})$. Hence by \cite[Chapter 1, Proposition 21(7)%
]{Lawson} there exists $a^{\prime }\in A$ such that $a^{\prime }\leq a_{0}$
and $f(a^{\prime })=f(a)=b$. Since $a$ is a minimal element of $A$ we have
that $a^{\prime }=a_{0}$ and hence $b=f(a^{\prime })=f(a_{0})=b_{0}$. This
concludes the proof that $B$ has a minimal element, and that $E\left(
T\right) $ is well-founded.
\end{proof}

\section{Sparseness\label{Section: sparseness}}

A strongly productive ultrafilter $p$ on a (multiplicatively denoted)
semigroup $S$ is \emph{sparse }(see \cite[Definition 3.9]{Hindman-Jones}) if
for every $A\in p$ there are a sequence $\vec{x}=\left( x_{n}\right) _{n\in
\omega }$ in $S$ and a subsequence $\vec{y}=\left( x_{k_{n}}\right) _{n\in
\omega }$ of $\vec{x}$ such that:

\begin{itemize}
\item $\mathop{\mathrm{FP}}\nolimits(\vec{y})\in p$;

\item $\mathop{\mathrm{FP}}\nolimits(\vec{x})\subset A$;

\item $\left\{ k_{n}:n\in \omega \right\} $ is coinfinite in $\omega $.
\end{itemize}

Suppose that $\mathbb{F}$ is the partial semigroup of finite nonempty
subsets of $\omega $, where, for $a,b\in \mathbb{F}$ the product $ab$ is
defined and equal to $a\cup b$ if and only if $\max(a)<\min(b)$. A strongly
productive ultrafilter on the partial semigroup $\mathbb{F}$ is an \emph{%
ordered union ultrafilter} as defined in \cite[page 92]{Blass}. A strongly
productive ultrafilter $p$ on a multiplicatively denoted semigroup $S$ is 
\emph{multiplicatively isomorphic to an ordered union ultrafilter }if there
is a sequence $\vec{x}$ such that the function%
\begin{eqnarray*}
f:&\mathbb{F}\rightarrow &\mathop{\mathrm{FP}}\nolimits(\vec{x}) \\
&a\mapsto &\prod_{i\in a}x_{i}
\end{eqnarray*}%
is injective, and furthermore 
\begin{equation*}
\left\{ f^{-1}[A]:A\in p\right\}
\end{equation*}%
is an ordered union ultrafilter.

\begin{lemma}
\label{Lemma: mult isom}If $p$ is multiplicatively isomorphic to an ordered
union ultrafilter, then $p$ is sparse strongly productive. In particular
every ordered union ultrafilter is sparse.
\end{lemma}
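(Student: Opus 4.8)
The plan is to directly unwind the definitions of \emph{sparse} (from \cite[Definition 3.9]{Hindman-Jones}) and of \emph{multiplicatively isomorphic to an ordered union ultrafilter}, using the corresponding known sparseness property of ordered union ultrafilters on $\mathbb F$. So suppose $p$ is multiplicatively isomorphic to an ordered union ultrafilter via a sequence $\vec x$, and let $f\colon\mathbb F\to\fp(\vec x)$, $a\mapsto\prod_{i\in a}x_i$ be the associated injection, so that $u=\{f^{-1}[A]:A\in p\}$ is an ordered union ultrafilter on $\mathbb F$. Note first that $\fp(\vec x)\in p$ (it equals $f[\mathbb F]$, and $\mathbb F\in u$), so it suffices to verify the sparseness condition for sets $A\in p$ with $A\subseteq\fp(\vec x)$. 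Given such an $A$, the set $B=f^{-1}[A]$ lies in $u$; since $u$ is sparse as an ordered union ultrafilter (this is the known part I would cite, e.g.\ from \cite{Blass} or \cite{Hindman-Jones}), there is an unordered/ordered sequence $\vec a=(a_n)_{n\in\omega}$ in $\mathbb F$ with $\max(a_n)<\min(a_{n+1})$ for all $n$ and a subsequence $\vec b=(a_{k_n})_{n\in\omega}$ such that $\fp(\vec b)\in u$, $\fp(\vec a)\subseteq B$, and $\{k_n:n\in\omega\}$ is coinfinite in $\omega$.

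Now I would transport this witnessing data back to $S$ via $f$. Since the $a_n$ are pairwise ``ordered'' (each entirely below the next in $\omega$), if $\vec y=(y_n)_{n\in\omega}$ is \emph{any} enumeration of $S$ extending a choice of elements whose restriction to the relevant indices realizes $\prod_{i\in a_n}x_i$ in increasing index order, then the finite products from the sequence $(z_n)_{n\in\omega}$ defined by $z_n=\prod_{i\in a_n}x_i$ agree with $f$ applied to finite unions of the $a_n$'s: concretely, for a finite $c\subseteq\omega$ one has $\prod_{n\in c}z_n=f\!\left(\bigcup_{n\in c}a_n\right)$ because the index sets $a_n$ for $n\in c$ are ``consecutive'' in the order of $\omega$ (their maxima and minima interleave correctly), so the product taken in increasing order of the $n$'s coincides with the single big product $\prod_{i\in\bigcup_{n\in c}a_n}x_i$. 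Hence $\fp(\vec z)=f[\fp(\vec a)]\subseteq f[B]\subseteq A$, and similarly $\fp\big((z_{k_n})_{n}\big)=f[\fp(\vec b)]\in p$ since $\fp(\vec b)\in u$. Finally, the coinfiniteness of $\{k_n:n\in\omega\}$ in $\omega$ is literally the third bullet of the definition of sparseness, now phrased for the subsequence $(z_{k_n})_n$ of $\vec z$.

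The second assertion, that every ordered union ultrafilter is itself sparse, is the special case $S=\mathbb F$ and $\vec x$ the sequence of singletons $(\{n\})_{n\in\omega}$: then $f$ is the identity-like map $a\mapsto a$ on $\mathbb F$, which is injective, and $\{f^{-1}[A]:A\in p\}=p$ is an ordered union ultrafilter by hypothesis, so $p$ is multiplicatively isomorphic to an ordered union ultrafilter and the first part applies. (Strictly, one should check here that the notion of ``sparse'' for partial semigroups reduces in this case to the combinatorial statement one wants; this is immediate from the definitions.)

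The step I expect to be the main obstacle is the bookkeeping in the middle paragraph: making precise that a finite product $\prod_{n\in c}z_n$ of the ``blocks'' $z_n=f(a_n)$, \emph{taken in increasing order of the block indices $n$}, really equals $f\big(\bigcup_{n\in c}a_n\big)$ --- i.e.\ the single $\fp$-product over the union, taken in increasing order of indices in $\omega$. This is exactly where one uses that the blocks $a_n$ are \emph{ordered} (the defining feature of ordered union ultrafilters: $\max(a_n)<\min(a_{n+1})$), so that concatenating them in block order reproduces the global increasing order on $\bigcup_{n\in c}a_n$. Once this identity is in hand, everything else is a routine translation of the three bullets in the definition of sparseness through the injection $f$.
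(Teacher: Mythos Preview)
Your transport-via-$f$ idea is exactly what the paper does, and your bookkeeping paragraph (that $\prod_{n\in c}z_n=f\big(\bigcup_{n\in c}a_n\big)$ because the blocks $a_n$ are ordered) is fine. The problem is the logical structure around it.

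You prove the first assertion by \emph{citing} that the ordered union ultrafilter $u$ is sparse, and then you prove the second assertion (``every ordered union ultrafilter is sparse'') by applying the first assertion to $S=\mathbb{F}$. That is circular. Moreover, the citations you propose do not deliver the input you need: \cite{Blass} predates this notion of sparseness entirely, and \cite{Hindman-Jones} \emph{asks} (Question~3.26) whether very strongly productive ultrafilters---equivalently, via the isomorphism you are using, ordered union ultrafilters---are sparse; it does not prove it. So the step ``since $u$ is sparse as an ordered union ultrafilter'' is precisely the content of the lemma, not something available off the shelf.

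The paper breaks the circle by invoking a genuinely prior combinatorial fact about union ultrafilters, namely \cite[Theorem~4]{Krautzberger} (equivalently \cite[Theorem~2.6]{Hindman-Steprans-Strauss}): given $\fp(\vec b)\in q$ there is $W\in q$ with $W\subseteq\fp(\vec b)$ and $\bigcup W$ having infinite complement in $\bigcup_{i\in\omega}b_i$. One then sets $D=\{i\in\omega:b_i\subseteq\bigcup W\}$, observes that $W\subseteq\fp\big((b_i)_{i\in D}\big)$ so that $\fp\big((b_i)_{i\in D}\big)\in q$, and that $D$ is coinfinite in $\omega$. This produces the sparseness witness for $q$ directly, and then your transport argument (which is the same as the paper's) carries it over to $p$. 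In short: replace your appeal to ``$u$ is sparse'' by Krautzberger's theorem plus the two-line construction of $D$, and your proof becomes the paper's proof.
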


\begin{proof}
Suppose that the sequence $\vec{x}$ in $S$ and the function $f:\mathbb{F}%
\rightarrow \mathop{\mathrm{FP}}\nolimits(\vec{x})$ witness the fact that $p 
$ is multiplicatively isomorphic to an ordered union ultrafilter. Fix an
element $B$ of $p$, and observe that 
\begin{equation*}
q=\left\{ f^{-1}\left[ A\cap B\right] :A\in p\right\}
\end{equation*}%
is an ordered union ultrafilter. Therefore there is a sequence $\vec{b}$ in $%
\mathbb{F}$ such that all the products from $\vec{b}$ are defined
(equivalently, $\max(b_{i})<\min(b_{i+1})$ for every $i\in \omega $), and $%
\mathop{\mathrm{FP}}\nolimits(\vec{b})\in q$. Moreover by \cite[Theorem 4]%
{Krautzberger} (see also \cite[Theorem 2.6]{Hindman-Steprans-Strauss}) there
is an element $W$ of $q$ contained in $\mathop{\mathrm{FP}}\nolimits(\vec{b}%
) $ such that $\bigcup W$ has infinite complement in $\bigcup_{i\in \omega
}b_{i}$. Denote by $D$ the set of $i\in \omega $ such that $b_{i}\subset
\bigcup W$. Observe that%
\begin{equation*}
\bigcup_{i\in D}b_{i}=\bigcup W
\end{equation*}%
and 
\begin{equation*}
W\subset \mathop{\mathrm{FP}}\nolimits\left( \left( b_{i}\right) _{i\in
D}\right) \text{.}
\end{equation*}%
In particular $D$ has infinite complement in $\omega $ and $%
\mathop{\mathrm{FP}}\nolimits\left( (b_i)_{i\in D}\right) $ belongs to $q$.
Therefore the sequence $\overrightarrow{x}$ in $S$ such that $x_{i}=f(b_{i})$
for every $i\in \omega $ is such that $\mathop{\mathrm{FP}}\nolimits(\vec{x}%
)\subset B$ and $\mathop{\mathrm{FP}}\nolimits\left( \left( x_{i}\right)
_{i\in D}\right) \in p$, witnessing the fact that $p$ is sparse strongly
productive.
\end{proof}

We will now define a condition on sequences that ensures the existence of a
multiplicative isomorphism with an ordered union ultrafilter. This can be
seen as a noncommutative analogue of the notion of \emph{strong uniqueness
of finite sums} introduced in \cite[Definition 3.1]{Hindman-Steprans-Strauss}
in a commutative context.

\begin{definition}
\label{Definition: uniqueness} A sequence $\vec{x}$ in a semigroup $S$
satisfies the \emph{ordered uniqueness of finite products} if the function%
\begin{eqnarray*}
f:\mathbb{F}&\rightarrow  &\mathop{\mathrm{FP}}\nolimits(\vec{x}) \\
a&\mapsto& \prod_{i\in a}x_{i}
\end{eqnarray*}%
is an isomorphism of partial semigroups from $\mathbb{F}$ to $%
\mathop{\mathrm{FP}}\nolimits(\vec{x})$. Equivalently $f$ is injective and
if $a$,$b$ are elements of $\mathbb{F}$ such that $f(a)f(b)\in %
\mathop{\mathrm{FP}}\nolimits(\vec{x})$, then the maximum element of $a$ is
strictly smaller than the minimum element of $b$.
\end{definition}

For example suppose that $S$ is the free semigroup on countably many
generators $\left\{ s_{n}:n\in \omega \right\} $. It is not difficult to see
that the sequence $\left( s_{n}\right) _{n\in \omega }$ in $S$ satisfies the
ordered uniqueness of finite products.
.

\begin{remark}
\label{Remark: ordedness isomorphism}If a strongly productive ultrafilter $p$
on $S$ contains $\mathop{\mathrm{FP}}\nolimits(\vec{x})$ for some sequence $%
\vec{x}$ in $S$ satisfying the ordered uniqueness of finite products, then $%
p $ is multiplicatively isomorphic to an ordered union ultrafilter.
\end{remark}

Remark \ref{Remark: ordedness isomorphism} follows immediately from the fact
that an ordered union ultrafilter is just a strongly productive ultrafilter
on the partial semigroup $\mathbb{F}$.

The following immediate consequence of Remark \ref{Remark: ordedness
isomorphism} and Lemma \ref{Lemma: mult isom} can be seen as a
noncommutative analogue of \cite[Theorem 3.2]{Hindman-Steprans-Strauss} (see
also \cite[Corollary 2.9]{David-2}).

\begin{corollary}
\label{Corollary: uniqueness sparse} Let $p$ be a strongly productive
ultrafilter on a semigroup $S$. If $p$ contains $\mathop{\mathrm{FP}}%
\nolimits(\vec{x})$ for some sequence $\vec{x}$ satisfying the ordered
uniqueness of finite products, then $p$ is sparse.
\end{corollary}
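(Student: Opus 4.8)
The plan is simply to chain together the two results stated immediately above, Remark~\ref{Remark: ordedness isomorphism} and Lemma~\ref{Lemma: mult isom}; once those are in hand the corollary is a two-step deduction. First I would take the sequence $\vec{x}$ supplied by the hypothesis, so that $\mathop{\mathrm{FP}}\nolimits(\vec{x})\in p$ and $\vec{x}$ satisfies the ordered uniqueness of finite products. By Remark~\ref{Remark: ordedness isomorphism} this is exactly the condition needed to conclude that $p$ is multiplicatively isomorphic to an ordered union ultrafilter, the isomorphism being realized by the map $f\colon\mathbb{F}\to\mathop{\mathrm{FP}}\nolimits(\vec{x})$, $a\mapsto\prod_{i\in a}x_{i}$ of Definition~\ref{Definition: uniqueness}.

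Second, I would feed this into Lemma~\ref{Lemma: mult isom}, whose conclusion is precisely that an ultrafilter multiplicatively isomorphic to an ordered union ultrafilter is sparse strongly productive. In particular $p$ is sparse, which is the assertion of the corollary. Written out, the proof amounts to little more than the sentence: by Remark~\ref{Remark: ordedness isomorphism} and Lemma~\ref{Lemma: mult isom}, $p$ is sparse.

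I do not expect any genuine obstacle. The one point worth a moment's care is matching the vocabulary: being multiplicatively isomorphic to an ordered union ultrafilter asks for an injective $f$ with $\{f^{-1}[A]:A\in p\}$ an ordered union ultrafilter, while ordered uniqueness of finite products asks that $f$ be an isomorphism of partial semigroups from $\mathbb{F}$ onto $\mathop{\mathrm{FP}}\nolimits(\vec{x})\in p$. Since an ordered union ultrafilter is by definition nothing but a strongly productive ultrafilter on $\mathbb{F}$, and since $p$ has a basis of members contained in $\mathop{\mathrm{FP}}\nolimits(\vec{x})$ (as $p$ is strongly productive and $\mathop{\mathrm{FP}}\nolimits(\vec{x})\in p$), transporting $p$ across $f^{-1}$ yields exactly such an ultrafilter; this is what Remark~\ref{Remark: ordedness isomorphism} records. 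If one asks where the substance of the statement really lies, it is inside Lemma~\ref{Lemma: mult isom}, which uses Krautzberger's theorem on thinning ordered union ultrafilters so as to leave an infinite complement in the underlying index set; but that lemma is already available, so nothing further is needed here.
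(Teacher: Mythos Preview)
Your proposal is correct and matches the paper's approach exactly: the paper presents this corollary as an ``immediate consequence of Remark~\ref{Remark: ordedness isomorphism} and Lemma~\ref{Lemma: mult isom},'' which is precisely the two-step chaining you describe. No further argument is given or needed.
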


In the remainder of this section, we will present an application of
Corollary \ref{Corollary: uniqueness sparse} to a question of Neil Hindman
and Lakeshia Legette Jones from \cite{Hindman-Jones} about \emph{very
strongly productive }ultrafilters on the free semigroup on countably many
generators.

Recall that a sequence $\vec{y}$ on a semigroup $S$ is a \emph{product
subsystem }of the sequence $\vec{x}$ in $S$ if there is a sequence $\left(
a_{n}\right) _{n\in \omega }$ in $\mathbb{F}$ such that $y_{n}=\prod_{i\in
a_{n}}x_{i}$ and the maximum element of $a_{n}$ is strictly smaller than the
minimum element of $a_{n+1}$ for every $n\in \omega $. Suppose that $S$ is
the free semigroup on countably many generators, and $\vec{s}$ is an
enumeration of its generators. A \emph{very strongly productive ultrafilter }%
on $S$ as in \cite[Definition 1.2]{Hindman-Jones} is an ultrafilter $p$ on $%
S $ generated by sets of the form $\mathop{\mathrm{FP}}\nolimits(\vec{x})$
where $\vec{x}$ is a product subsystem of $\vec{s}$.

\begin{theorem}
\label{Theorem: very sparse} Every very strongly productive ultrafilter on
the free semigroup $S$ is multiplicatively isomorphic to an ordered union
ultrafilter, and hence sparse.
\end{theorem}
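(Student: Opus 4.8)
The plan is to deduce the theorem from Corollary \ref{Corollary: uniqueness sparse} (equivalently, from Remark \ref{Remark: ordedness isomorphism} together with Lemma \ref{Lemma: mult isom}) by exhibiting inside $p$ a set of the form $\mathop{\mathrm{FP}}\nolimits(\vec{x})$ with $\vec{x}$ satisfying the ordered uniqueness of finite products. Since $p$ is very strongly productive, $p$ is generated by --- in particular contains --- sets $\mathop{\mathrm{FP}}\nolimits(\vec{x})$ where $\vec{x}$ is a product subsystem of the fixed enumeration $\vec{s}$ of the free generators of $S$. So the whole task reduces to the following claim.

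\begin{claim}
Every product subsystem $\vec{x}$ of $\vec{s}$ satisfies the ordered uniqueness of finite products.
\end{claim}

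To prove the claim, fix $\vec{x}$ with $x_{n}=\prod_{i\in a_{n}}s_{i}$, where $(a_{n})_{n\in\omega}$ is a sequence in $\mathbb{F}$ with $\max(a_{n})<\min(a_{n+1})$ for every $n\in\omega$. I would rely on two structural facts: first, that $S$, being free, is left cancellative and every word has a unique expression as a product of generators; second, that the sets $a_{n}$ are pairwise disjoint and ``blockwise increasing'', so that $n<m$ implies every element of $a_{n}$ is strictly smaller than every element of $a_{m}$, and in particular $n\mapsto\min(a_{n})$ is injective. From these I would first show that $\{x_{n}:n\in\omega\}$ is a uniquely decipherable code: if a word $w$ of $S$ has any expression $w=x_{d_{1}}x_{d_{2}}\cdots x_{d_{k}}$ as a product of terms of $\vec{x}$, then the leftmost generator occurring in $w$ is $s_{\min(a_{d_{1}})}$, which pins down $d_{1}$ by injectivity of $n\mapsto\min(a_{n})$; cancelling $x_{d_{1}}$ on the left and inducting on the number of generators in $w$ shows the expression is unique. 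In particular $f\colon\mathbb{F}\to\mathop{\mathrm{FP}}\nolimits(\vec{x})$, $a\mapsto\prod_{i\in a}x_{i}$, is injective.

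It then remains to verify the ordering condition of Definition \ref{Definition: uniqueness}. Suppose $a,b\in\mathbb{F}$ with $f(a)f(b)\in\mathop{\mathrm{FP}}\nolimits(\vec{x})$, say $f(a)f(b)=f(c)$ with $c=\{\gamma_{1}<\dots<\gamma_{r}\}$. Writing $a=\{\alpha_{1}<\dots<\alpha_{p}\}$ and $b=\{\beta_{1}<\dots<\beta_{q}\}$, the word $f(c)$ has the two expressions $x_{\gamma_{1}}\cdots x_{\gamma_{r}}$ and $x_{\alpha_{1}}\cdots x_{\alpha_{p}}x_{\beta_{1}}\cdots x_{\beta_{q}}$ as products of terms of $\vec{x}$; by unique decipherability these agree term by term, whence $\max(a)=\alpha_{p}<\beta_{1}=\min(b)$. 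This is exactly the ordered uniqueness of finite products, proving the claim. Consequently $p$ contains $\mathop{\mathrm{FP}}\nolimits(\vec{x})$ for a sequence $\vec{x}$ satisfying the ordered uniqueness of finite products, so Remark \ref{Remark: ordedness isomorphism} gives that $p$ is multiplicatively isomorphic to an ordered union ultrafilter, and Lemma \ref{Lemma: mult isom} (or Corollary \ref{Corollary: uniqueness sparse}) gives that $p$ is sparse.

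I expect the claim to be the only real content: the subtle point is that a product subsystem forces the \emph{ordering} condition, not merely injectivity of $f$, since a priori a word might factor through $\vec{x}$ in several ways, or with indices out of increasing order. The block condition $\max(a_{n})<\min(a_{n+1})$ is precisely what makes $\{x_{n}:n\in\omega\}$ a uniquely decipherable code, and the one spot that needs care is the ``leftmost generator'' argument --- in particular, keeping in mind that the blocks $a_{n}$ need not be intervals of indices, only pairwise disjoint and increasingly placed. Everything else is bookkeeping with the definitions.
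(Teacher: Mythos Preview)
Your proof is correct and follows the same overall strategy as the paper: exhibit a sequence $\vec{x}$ with $\mathop{\mathrm{FP}}\nolimits(\vec{x})\in p$ satisfying the ordered uniqueness of finite products, then invoke Remark \ref{Remark: ordedness isomorphism} and Lemma \ref{Lemma: mult isom}. The only difference is in which sequence you use. The paper takes the shortcut of citing \cite[Theorem 4.2]{Hindman-Jones}, which gives $\mathop{\mathrm{FP}}\nolimits(\vec{s})\in p$ directly, and then uses the essentially trivial observation (recorded just after Definition \ref{Definition: uniqueness}) that the generating sequence $\vec{s}$ itself satisfies ordered uniqueness of finite products. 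You instead work straight from the definition of very strongly productive, picking an arbitrary product subsystem $\vec{x}$ of $\vec{s}$ with $\mathop{\mathrm{FP}}\nolimits(\vec{x})\in p$, and then prove the stronger claim that \emph{every} product subsystem of $\vec{s}$ satisfies ordered uniqueness. Your unique-decipherability argument for that claim is sound: the block condition $\max(a_{n})<\min(a_{n+1})$ makes $n\mapsto\min(a_{n})$ strictly increasing, so the leftmost generator of any product of $x_{n}$'s pins down the first factor, and left cancellation in the free semigroup lets you induct. The upshot is that your route is self-contained (no appeal to \cite{Hindman-Jones}), at the cost of proving a slightly harder lemma; the paper's route is shorter but leans on the external characterization.
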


\begin{proof}
Observe that by \cite[Theorem 4.2]{Hindman-Jones} very strongly productive
ultrafilters on $S$ are exactly the strongly productive ultrafilters
containing $\mathop{\mathrm{FP}}\nolimits(\vec{s})$ as an element. In
particular, since the sequence $\vec{s}$ satisfies the ordered uniqueness of
finite products, all very strongly productive ultrafilters on $S$ are
multiplicatively isomorphic to ordered union ultrafilters by Remark \ref%
{Remark: ordedness isomorphism}, and hence sparse by Lemma \ref{Lemma: mult
isom}.
\end{proof}

Theorem \ref{Theorem: very sparse} answers Question 3.26 from \cite%
{Hindman-Jones}. Corollary 3.11 of \cite{Hindman-Jones} asserts that a
sparse very strongly productive ultrafilter on $S$ can be written only
trivially as a product of ultrafilters on the free group on the same
generators. Since by Theorem \ref{Theorem: very sparse} any very strongly
productive ultrafilter on $S$ is sparse, one can conclude that the
conclusion of \cite[Corollary 3.11]{Hindman-Jones} holds for any very
strongly productive ultrafilter on $S$. This is the content of Corollary \ref%
{Corollary: very sparse}.

\begin{corollary}
\label{Corollary: very sparse}Let $G$ be the free group on the sequence of
generators $\vec{s}$, and let $S$ be the free semigroup on the same
generators. Suppose that $p$ is a very strongly productive ultrafilter on $S$%
. If $q$,$r$ are ultrafilters on $G$ such that $qr=p$, then there is an
element $w$ of $G$ such that one of the following statements hold:

\begin{enumerate}
\item $r=wp$ and $q=pw^{-1}$;

\item $r=w$ and $q=pw^{-1}$;

\item $r=wp$ and $q=w^{-1}$.
\end{enumerate}

In particular, if $q,r\in G^{\ast }$ are such that $qr=p$, then $r=wp$ and $%
q=pw^{-1}$ for some $w\in G$.
\end{corollary}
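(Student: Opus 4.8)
The plan is to deduce the statement directly from the combination of Theorem \ref{Theorem: very sparse} with \cite[Corollary 3.11]{Hindman-Jones}. First I would recall that \cite[Corollary 3.11]{Hindman-Jones} establishes precisely the three-way dichotomy in the statement under the additional hypothesis that the very strongly productive ultrafilter $p$ is sparse: whenever $q,r$ are ultrafilters on the free group $G$ with $qr=p$, there is an element $w\in G$ realizing one of the three listed alternatives. Since Theorem \ref{Theorem: very sparse} asserts that \emph{every} very strongly productive ultrafilter on $S$ is sparse, the sparseness hypothesis is automatically satisfied, and the first (unnumbered) part of the corollary follows immediately.

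For the ``in particular'' clause, I would argue that the two degenerate alternatives cannot occur when both $q$ and $r$ are nonprincipal. In alternative (2) we have $r=w$ for some $w\in G$, that is, $r$ is the principal ultrafilter concentrated at $w$, contradicting $r\in G^{\ast}$. Likewise in alternative (3) we have $q=w^{-1}$, again a principal ultrafilter, contradicting $q\in G^{\ast}$. Hence alternative (1) must hold, which is exactly the asserted conclusion that $r=wp$ and $q=pw^{-1}$ for some $w\in G$.

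I do not expect a genuine obstacle here: the entire content of the corollary has been transferred into Theorem \ref{Theorem: very sparse} (which upgrades ``sparse'' from a hypothesis to an unconditional theorem) together with the already available \cite[Corollary 3.11]{Hindman-Jones}. The only points requiring a little care are the bookkeeping of principal versus nonprincipal ultrafilters when eliminating alternatives (2) and (3), and the remark that the products $wp$ and $pw^{-1}$ are to be read in the semigroup compactification $\beta G$, where left and right translation by the (principal ultrafilter at the) group element $w$ are the continuous extensions of the corresponding translations of $G$.
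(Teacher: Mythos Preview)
Your proposal is correct and matches the paper's approach exactly: the paper does not give a separate proof of Corollary~\ref{Corollary: very sparse} but states, immediately before it, that the result follows by combining Theorem~\ref{Theorem: very sparse} (every very strongly productive ultrafilter on $S$ is sparse) with \cite[Corollary 3.11]{Hindman-Jones}. Your handling of the ``in particular'' clause by eliminating the principal alternatives (2) and (3) when $q,r\in G^{\ast}$ is also the intended reading.
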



\begin{thebibliography}{99}
\bibitem{Blass} Andreas Blass. \newblock Ultrafilters related to Hindman's
finite-unions theorem and its extensions. \newblock In \emph{Logic and
combinatorics ({A}rcata, {C}alif., 1985)}, volume~65 of \emph{Contemp. Math.}%
, pages 263--274. Amer. Math. Soc., Providence, RI, 1987.

\bibitem{Capraro-Lupini} V. Capraro and M. Lupini, with an
appendix by Vladimir Pestov. \newblock Introduction to
sofic and hyperlinear groups and Connes' Embedding Conjecture . \newblock {\em Lecture Notes in Mathematics},
Springer (in press).

\bibitem{Clifford-Preston-II} A. H. Clifford and G. B. Preston. \newblock %
The algebraic theory of semigroups. Vol. II. \newblock {\em Mathematical Surveys},
No. 7 American Mathematical Society, Providence, R.I. 1967



\bibitem{David-2} David Fernández~Bretón. \newblock Strongly summable
ultrafilters, union ultrafilters, and the trivial sums property. 
\newblock{To appear in \emph{Canad. J. Math.} (available online at 
\href{http://dx.doi.org/10.4153/CJM-2015-023-9}{http://dx.doi.org/10.4153/CJM-2015-023-9})}.

\bibitem{Fuchs} L{á}szl{ó} Fuchs. \newblock Infinite abelian groups. {V}ol. {%
I}. \newblock {\em Pure and Applied Mathematics}, Vol. 36. Academic Press, New
York-London, 1970.

\bibitem{Grillet} Pierre~Antoine Grillet. \newblock Semigroups. An
introduction to the structure theory, volume 193 of \emph{Monographs and
Textbooks in Pure and Applied Mathematics}. \newblock Marcel Dekker Inc.,
New York, 1995.

\bibitem{Grillet-abelian} Pierre~Antoine Grillet. \newblock Commutative
semigroups, volume~2 of \emph{Advances in Mathematics (Dordrecht)}. 
\newblock
Kluwer Academic Publishers, Dordrecht, 2001.

\bibitem{Hindman-summable} Neil Hindman. \newblock Summable ultrafilters and
finite sums. \newblock In \emph{Logic and combinatorics ({A}rcata, {C}alif.,
1985)}, volume~65 of \emph{Contemp. Math.}, pages 263--274. Amer. Math.
Soc., Providence, RI, 1987.

\bibitem{Hindman-Jones} Neil Hindman and Lakeshia Legette~Jones. \newblock %
Idempotents in $\beta {S}$ that are only products trivially. 
\newblock {\em 
New York J. Math.} 20 (2014), 57--80

\bibitem{Hin-Prot-Strauss} Neil Hindman, Igor Protasov, and Dona Strauss. %
\newblock Strongly summable ultrafilters on abelian groups. 
\newblock {\em
Mat. Stud.}, 10(2):121--132, 1998.

\bibitem{Hindman-Steprans-Strauss} Neil Hindman, Juris Stepr{\={a}}ns, and
Dona Strauss. \newblock Semigroups in which all strongly summable
ultrafilters are sparse. \newblock {\em New York J. Math.}, 18:835--848,
2012.

\bibitem{Hindman-Strauss} Neil Hindman and Dona Strauss. \newblock Algebra
in the {S}tone-\v{C}ech compactification: theory and applications. 2nd
revised and extended ed. \emph{De Gruyter Textbook}. \newblock Walter de
Gruyter \& Co., Berlin, 2012.

\bibitem{Krautzberger} Peter Krautzberger. \newblock On strongly summable
ultrafilters. \newblock {\em New York J. Math.}, 16:629--649, 2010.

\bibitem{Lawson} Mark V.~Lawson. \newblock Inverse semigroups. The theory of
partial symmetries.\newblock {\em World Scientific Publishing Co. Inc.},
River Edge, N.J., 1998.

\bibitem{Piochi1} Brunetto Piochi. \newblock Solvability in inverse
semigroups. \newblock {\em Semigroup Forum}, 34:287--303, 1986.

\bibitem{Scott} William R.~Scott. \newblock Group theory. \newblock\emph{%
Prentice-Hall Inc.}, Englewood Cliffs, N.J., 1964.
\end{thebibliography}
\end{document}